\newcommand{\R}{\mathbb{R}}
\newcommand{\C}{\mathbb{C}}
\renewcommand{\H}{\mathbb{H}}
\newcommand{\Sym}{\mathcal{S}}
\newcommand{\psd}{\succeq}
\newcommand{\pd}{\succ}
\newcommand{\Amap}{\mathcal{A}}
\DeclareMathOperator*{\conv}{conv}
\newcommand{\probclass}{\text{trigonometric Wahba problems}}
\newcommand{\probinstance}{\text{trigonometric Wahba problem}}
\newcommand{\moment}{\mathcal{M}}
\newcommand{\BlkHank}{\textup{Hankel}}
\newcommand{\BlkToep}{\textup{Toeplitz}}
\newcommand{\tr}{\textup{tr}}
\theoremstyle{plain}
\newtheorem{thm}{Theorem}[section]
\newtheorem{proposition}[thm]{Proposition}
\newtheorem{lemma}[thm]{Lemma}
\theoremstyle{definition}
\theoremstyle{remark}
\title{A convex solution to Psiaki's first joint attitude and spin-rate 
    estimation problem}
\author{James Saunderson \and Pablo A.~Parrilo \and Alan S.~Willsky
    \thanks{The authors are with the Laboratory for Information and 
    Decision Systems, Department of Electrical Engineering and Computer
    Science, Massachusetts Institute of Technology, Cambridge MA, 02139, USA\@.
    Email: \{jamess, parrilo, willsky\}@mit.edu. This research 
    was funded by the Air Force Office of Scientific Research
    under grants \#FA9550-12-1-0287 and \#FA9550-11-1-0305. 
    A prelimiary version of 
this work appears in~\cite{saunderson2014semidefiniterelaxations}.}}
\begin{document}
\maketitle

\begin{abstract}
    We consider the problem of jointly estimating the attitude 
    and spin-rate of a spinning spacecraft. 
    Psiaki (J. Astronautical Sci., 
    57(1-2):73--92, 2009) has formulated a family of 
    optimization problems that generalize the classical least-squares
    attitude estimation problem, known as Wahba's problem,
    to the case of a spinning spacecraft. 
    If the rotation axis is fixed and known, but the spin-rate
    is unknown (such as for nutation-damped spin-stabilized spacecraft)
    we show that Psiaki's problem can be reformulated
    exactly as a type of tractable convex optimization problem
   called a semidefinite optimization problem.
    This reformulation allows us to globally solve the problem 
    using standard numerical routines for semidefinite optimization. It
   also provides a natural semidefinite relaxation-based approach to more 
    complicated variations on the problem.
\end{abstract}

%%%%%%%%%%%%%%%%%% section 1 %%%%%%%%%%%%%%%%%

\section{Introduction}
\label{sec:intro}

Spacecraft attitude estimation is a fundamental problem, arising, for instance,
as a natural subproblem whenever attitude control is required. Since spacecraft
dynamics are non-linear, a typical and successful approach to attitude
estimation is to employ variants of the Extended Kalman Filter
(EKF)~\cite{lefferts1982kalman}.  As with any method based on linearization of
non-linear dynamics, EKF-based approaches can fail to converge given poor
initial estimates, and can become unstable in the presence of large
disturbances~\cite{psiaki2000attitude}.  Many truly non-linear attitude
estimation methods have also been proposed (see~\cite{crassidis2007survey} for
a survey). An important example is the static least-squares attitude estimation
problem known as Wahba's problem~\cite{wahba1965least}.  In Wahba's problem we
are simultaneously given a batch of vector measurements (from sun sensors, star
trackers, etc.) in the body frame and corresponding reference directions in an
inertial frame. The aim is to find the rotation matrix (i.e.\ direction cosine
matrix) that minimizes the sum of the squared errors between the transformed
reference directions and the observed vector measurements.  Wahba's problem, as
stated, applies most naturally to a static spacecraft.  Nevertheless, it has
also found use as a subroutine in various recursive estimation algorithms
including those that estimate the full dynamical state of the spacecraft (see,
e.g., \cite{psiaki2000attitude,gebre2000gyro}).

Recently Psiaki has posed a number of generalizations of Wahba's problem to the
case of a spinning spacecraft~\cite{psiaki2009generalized}. These problems aim
to simultaneously estimate the initial attitude and spin-rate (or, more
generally, initial angular momentum) of the spacecraft from vector
measurements, without the need for gyroscope measurements. These
generalizations are particularly suited to spin-stabilized spacecraft without
gyroscopes.  We describe Wahba's problem and Psiaki's generalizations formally
in Section~\ref{sec:jas}.

In this paper we focus on the simplest of Psiaki's generalizations of Wahba's
problem. We refer to this problem as \emph{Psiaki's first problem}.  In this
problem we assume the spacecraft is spinning at a constant unknown angular
velocity around a known (stable) inertia axis.  This setting is relevant for
nutation-damped spin-stabilized spacecraft~\cite{psiaki2009generalized}. The
aim is to estimate the initial attitude and the unknown spin-rate given a
sequence of noisy vector measurements obtained at certain sampling instants,
together with corresponding reference directions. Wahba's problem arises as the
special case where the spin-rate is zero.

\subsection{Main contribution}
\label{sec:contribtions}
Our main contribution is to show that, when the sampling period is constant,
Psiaki's first problem can be reformulated exactly as a semidefinite
optimization problem (see Theorem~\ref{thm:mainsdp}). Semidefinite optimization problems (described in
Section~\ref{sec:semidefinite}) are a family of convex optimization problems
that generalize linear programming and can be solved globally with provable
efficiency guarantees using standard software.  Reformulating Psiaki's first
problem as a semidefinite optimization problem means that it, like Wahba's
problem, can be solved efficiently and globally, to high precision, using
numerical methods.

A description of Psiaki's first problem as the solution to a semidefinite
optimization problem allows us to do more than just solve the original problem
as stated. It also allows us to take a semidefinite relaxation-based approach
to many variants on Psiaki's problem. We illustrate this in
Section~\ref{sec:variations} by considering the example of a version of
Psiaki's first problem where explicit bounds on the measurement errors are
incorporated into the formulation.

\subsection{Organization of the paper}
\label{sec:outline}

The remainder of the paper is organized as follows. In
Section~\ref{sec:notation} we summarize notation not defined elsewhere in the
paper. In Section~\ref{sec:jas} we first describe Psiaki's generalizations of Wahba's
problem for spinning spacecraft. We then show how to write Psiaki's first problem
as an instance of a family of problems we call \emph{\probclass{}} (see~\eqref{eq:genform1}).
We conclude the section with a summary of prior work on Psiaki's problems. 
In Section~\ref{sec:semidefinite} we briefly describe 
semidefinite optimization problems in general before presenting our semidefinite
optimization-based reformulation of \probclass{}, and in particular of 
Psiaki's first problem. We defer the proofs
to the Appendix.  In Section~\ref{sec:variations} we describe a
variant on Psiaki's first problem that incorporates additional bounds on the
measurement noise (if they are available) and show how to extend our
semidefinite optimization-based reformulation of Psiaki's first problem to a
semidefinite relaxation of this variant. We also describe the results of a
simple numerical experiment comparing Psiaki's first problem and this
variant. In Section~\ref{sec:directions} we discuss possible future
research related to the work in this paper.  

\subsection{Notation}
\label{sec:notation}
We briefly summarize notation used throughout the body of the paper. Additional 
notation that is used only in the Appendix is introduced separately there.

\paragraph{Spaces}
Denote by $\R^{n\times n}$ the space of $n\times n$ real matrices.  If $X\in
\R^{n\times n}$ let $X^T$ be its transpose.  Let $\Sym^n$ be the space of
$n\times n$ symmetric matrices (i.e.\ matrices for which $X = X^T$).  Let
$\Sym_+^n$ denote the set of $n\times n$ symmetric positive semidefinite
matrices (i.e.\ $X\in \Sym_+^n$ if and only if $u^TXu\geq 0$ for all $u\in
\R^n$).  If $X\in \Sym_+^n$ we write $X \psd 0$ when the dimension is clear
from the context.

\paragraph{Inner products}
If $x,y\in \R^n$ then $\langle x,y\rangle = \sum_{i=1}^{n}x_i y_i$. 
If $x\in \R^n$ then define $\|x\| = \langle x,x\rangle^{1/2} = 
\left(\sum_{i=1}^{n}x_i^2\right)^{1/2}$ to be the usual Euclidean norm.
If $X,Y\in \R^{n\times n}$ then define an inner product by  
$\langle X,Y\rangle = \tr(X^TY) = \sum_{i,j=1}^{n}X_{ij}Y_{ij}$. 

\paragraph{Convexity}
Given a subset $S\subset \R^n$ then 
\[ \conv(S) = \left\{\sum_{i}\lambda_i x_i:\; \sum_{i}\lambda_i = 1,
\;\;x_i\in S,\; \lambda_i \geq 0,\;\text{for all $i$}\right\}\]
is the set of all convex combinations of elements of $S$. From the point of
view of optimization, if $c\in \R^n$ and $S$ is compact then 
\[ \max_{x\in S} \langle c,x\rangle = \max_{x\in \conv(S)} \langle c,x\rangle\]
so the optimal cost is the same whether we optimize the linear functional
defined by $c$ over $S$ or over its convex hull~\cite[Theorem
32.2]{rockafellar1997convex}.

\paragraph{Block matrices}
        If $T_0,T_1,\ldots,T_{N}$ are $d\times d$ matrices with $T_0$ being symmetric, 
        define the corresponding $d(N+1)\times d(N+1)$
        symmetric block Toeplitz matrix by
        \begin{equation}
            \label{eq:blktoep}
            \BlkToep(T_0,T_1,\ldots,T_N) = 
            \begin{bmatrix} T_0 & T_1 & T_2 & \cdots & T_N\\
            T_1^T & T_0 & T_1 & \ddots & \vdots\\
            T_2^T & T_1^T & \ddots & \ddots & \vdots\\
            \vdots & \ddots & \ddots & \ddots & T_1\\
            T_N^T & \cdots & \cdots & T_1^T & T_0\end{bmatrix}.
    \end{equation}
    Similarly if $S_1,S_2,\ldots,S_{2N+1}$ are symmetric $d\times d$, matrices 
    define the corresponding $d(N+1)\times d(N+1)$ block Hankel matrix by 
    \begin{equation}
        \label{eq:blkhank}
        \BlkHank(S_1,S_2,\ldots,S_{2N+1}) = 
        \begin{bmatrix} S_1 & S_2 & \cdots & S_{N} & S_{N+1}\\
        S_2 &  &  & S_{N+1} & S_{N+2}\\
        \vdots &  & \iddots &  & \vdots\\
        S_{N} & S_{N+1} &  &  & S_{2N}\\
S_{N+1} & S_{N+2} & \cdots & S_{2N} & S_{2N+1}\end{bmatrix}.
    \end{equation}

\paragraph{Unit quaternion parameterization of rotations}
We make extensive use of the quadratic parameterization of $SO(3)$, the set of rotation (or direction-cosine) 
matrices,  by unit quaternions, denoted by $\H$. 
Throughout we think of the unit quaternions geometrically as the unit sphere in $\R^4$
i.e.\ $\H = \{q\in \R^4:\|q\|=1\}$. We only ever work with a unit quaternion $q\in\H$ via the 
positive semidefinite matrix $qq^T$, avoiding the sign
ambiguity that would arise if we were to try to work directly with variables
$q\in \H$. It is enough only to consider $qq^T$ because any element of
$SO(3)$ can be expressed as $\Amap(qq^T)$ where $q\in \H$ and
$\Amap:\Sym^4\rightarrow \R^{3\times 3}$ is the linear map 
defined (following the convention in~\cite{crassidis2007survey}) by
\begin{equation}
    \label{eq:amap}
\Amap(Z) := \begin{bmatrix} Z_{11} - Z_{22} - Z_{33} + Z_{44} & 2Z_{12} + 2Z_{34} & 
    2Z_{13} - 2Z_{24}\\2Z_{12} - 2Z_{34} & -Z_{11} + Z_{22} - Z_{33} + Z_{44} & 2Z_{23}+2Z_{14}\\
2Z_{13} + 2Z_{24} & 2Z_{23} - 2Z_{14} & -Z_{11} - Z_{22} + Z_{33} + Z_{44}\end{bmatrix}.
\end{equation}
The adjoint of $\Amap$ (with respect to the inner product on matrices) is
$\Amap^*:\R^{3\times 3}\rightarrow \Sym^4$ defined by
\begin{equation}
    \label{eq:amapstar}
\Amap^*(Y) := \begin{bmatrix} Y_{11}-Y_{22}-Y_{33} & Y_{12} + Y_{21} & Y_{13}+Y_{31} & Y_{23}-Y_{32}\\
Y_{12}+Y_{21} & -Y_{11}+Y_{22} - Y_{33} & Y_{23}+Y_{32}& -Y_{13} + Y_{31}\\
Y_{13}+Y_{31} & Y_{23} + Y_{32} & -Y_{11} - Y_{22} + Y_{33} & Y_{12}-Y_{21}\\
Y_{23} - Y_{32} & -Y_{13}+Y_{31} & Y_{12}-Y_{21} & Y_{11} + Y_{22} + Y_{33}\end{bmatrix}.
\end{equation}
In other words for any $Z\in \Sym^4$ and any $Y\in \R^{3\times 3}$, we have the identity
\begin{equation}
    \label{eq:adjoint}
    \langle \Amap(Z),Y\rangle = \langle Z,\Amap^*(Y)\rangle.
\end{equation}

%%%%%%%%%%%%%%% section 2 %%%%%%%%%%%%%%%%%%
\section{Psiaki's generalizations of Wahba's problem for spinning
    spacecraft}
\label{sec:jas}

In this section we describe Wahba's problem~\cite{wahba1965least} and Psiaki's
generalizations to the case of a spinning
spacecraft~\cite{psiaki2009generalized}. For reasons discussed in
Section~\ref{sec:psiaki-form} we subsequently focus on the simplest of Psiaki's
problems: jointly estimating the attitude and spin-rate of a spacecraft
spinning around a stable inertia axis at a constant unknown rate.  In this case
we show how to reformulate the resulting optimization problem in the general
form
\begin{equation}
    \label{eq:genform1}\max_{\substack{Q\in SO(3)\\\omega\in [-\pi,\pi)}}
    \langle A_0,Q\rangle +
    \sum_{n=1}^{N}\left[ \langle A_n,\cos(\omega n)Q\rangle + 
        \langle B_n,\sin(\omega n)Q\rangle\right] 
\end{equation}
for appropriate collections of $3\times 3$ matrices $(A_n)_{n=0}^{N}$ and
$(B_n)_{n=1}^{N}$. Throughout, we call problems in the form~\eqref{eq:genform1}
\emph{\probclass{}}. In Section~\ref{sec:semidefinite} to follow, we show how
to reformulate \probclass{} as semidefinite optimization problems. 

\subsection{Wahba's problem}
\label{sec:wahba}
We briefly describe Wahba's least squares attitude estimation problem
posed in~\cite{wahba1965least} with solutions published 
in~\cite{farrell1966least}.

\paragraph{Vector measurements}
Suppose we are given a batch of noisy unit vector measurements 
$y_0,y_1,\ldots,y_N$ in the body frame
(obtained from star trackers, sun sensors, 
magnetometers, etc.) of corresponding unit reference directions 
$x_0,x_1,\ldots,x_N$ in the inertial frame.

\paragraph{Least squares objective}
Wahba's problem is to find the rotation matrix $Q\in SO(3)$
that transforms the reference directions to best fit the measured
vector measurements in the weighted least squares sense
by solving
\begin{equation}
\label{eq:wahba0}
\min_{Q\in SO(3)}\; \sum_{n=0}^{N}\frac{\kappa_n}{2}\|y_n - Qx_n\|^2
\end{equation}
where $\kappa_0,\kappa_1,\ldots,\kappa_N$ are non-negative scalar weights
that one would take to be larger for measurements with smaller noise 
variance. Since 
$\|Qx\|^2 = \|x\|^2$
for all $x\in \R^3$ we can expand the squares and see that this optimization
problem is equivalent to
\begin{equation}
    \label{eq:wahba}
    \max_{Q\in SO(3)}\; \langle \sum_{n=0}^{N}\kappa_n y_nx_n^T,Q\rangle
\end{equation}
where we have dropped an additive constant of 
$\sum_{n=0}^{N}\frac{\kappa_n}{2}(\|y_n\|^2+\|x_n\|^2)$.

\subsection{Psiaki's generalizations}
\label{sec:psiaki-form}
We now describe Psiaki's generalizations of Wahba's problem, and show
how Wahba's problem arises as a special case. 

\paragraph{Rigid body (Euler) equations}
Let $Q(t_0)\in SO(3)$ denote the initial attitude of the 
spacecraft, $\Omega(t_0)\in \R^3$ the initial body angular 
velocity, and $I_1\geq I_2\geq I_3$ the principal moments of 
inertia. Assuming the spacecraft undergoes torque-free motion about
its centre of mass then for $t\geq t_0$ the attitude $Q(t)$ and the body angular
velocity $\Omega(t) := \begin{bmatrix} \omega_1(t) & \omega_2(t) &
    \omega_3(t)\end{bmatrix}^T$ satisfy the rigid body equations:
\begin{equation}
    \label{eq:ode}
    \begin{matrix}
        I_1\;\dot{\omega}_1(t) = (I_2-I_3)\,\omega_2(t)\,\omega_3(t)\\
    I_2\;\dot{\omega}_2(t) = (I_3-I_1)\,\omega_3(t)\,\omega_1(t)\\
    I_3\;\dot{\omega}_3(t) = (I_1-I_2)\,\omega_1(t)\,\omega_2(t)\\
\end{matrix}\quad\text{and}\quad
    \dot{Q}(t) = \begin{bmatrix} 0 & -\omega_3(t) & \phantom{-}\omega_2(t)\\
        \phantom{-}\omega_3(t)
        & 0 & -\omega_1(t)\\-\omega_2(t) & \phantom{-}\omega_1(t) & 0\end{bmatrix}Q(t).
\end{equation}
Note that for every $t \geq t_0$ and every $\Omega(t_0)$ we have that $Q(t) =
\Phi(t-t_0;\Omega(t_0))Q(t_0)$ for some map $\Phi$ taking values in $SO(3)$. In
particular $Q(t)$ is always linear in the initial attitude $Q(t_0)$. 

\paragraph{Vector measurements}
Let $t_0,t_1,\ldots,t_N$ be a finite set of 
sampling instants. Assume, at sample instant $t_n$, that we are given a noisy unit vector 
measurement $y_n$ in the spacecraft body frame of a corresponding reference directions 
$x_n$ in the inertial frame. 

\paragraph{Least squares objective}

Following Wahba's least-squares-based objective, Psiaki suggests solving the
following weighted least-squares problem to estimate the initial attitude and
body angular velocity of the spacecraft, given only the vector measurements
$(y_n)_{n=0}^{N}$ and the reference directions $(x_n)_{n=0}^{N}$:
\begin{equation}
    \label{eq:psiaki-lsq}
    \min_{Q(t_0),\Omega(t_0)}\sum_{n=0}^{N}\frac{\kappa_n}{2}\|y_n -
    Q(t_n)x_n\|_2^2\end{equation}
subject to $Q(t)$ satisfying~\eqref{eq:ode} with initial conditions $Q(t_0)$
and $\Omega(t_0)$. Just as for Wahba's problem, the $\kappa_n$ are non-negative 
scalars.

\paragraph{Dependence on $\Omega(t_0)$}
In general, the dependence of $Q(t)$ on the initial body angular velocity
$\Omega(t_0)$ is quite complicated.  The relationship between $Q(t)$ and
$\Omega(t_0)$ simplifies under additional assumptions on $\Omega(t_0)$ and the
inertia tensor of the spacecraft. We now summarize these simplified problems
and name them for later reference.
\begin{description}
    \item[Wahba's problem] If $\Omega(t_0) = 0$, then $Q(t) = Q(t_0)$ for all
        $t\geq t_0$ and so the spacecraft is stationary. Adding this as a constraint
        we recover Wahba's original formulation~\eqref{eq:wahba0}.
    \item[Psiaki's first problem] Suppose $\Omega(t_0)$
            is aligned with the major inertia
        axis, and (without loss of generality) this is the first axis direction in body coordinates.
       Then $\Omega(t_0)= \begin{bmatrix}\omega & 0 & 0\end{bmatrix}^T$
      and so the dynamical constraints~\eqref{eq:ode} reduce to
        \begin{equation} \label{eq:spineq} Q(t) = \begin{bmatrix}1 & 0 & 0\\0 &
                \cos(\omega t) & -\sin(\omega t)\\0 & \sin(\omega t) &
                \cos(\omega t) \end{bmatrix}Q(t_0) \end{equation}
        where $\omega$ is the spin-rate (in rad/second). In this case the
        spacecraft is spinning with an \emph{unknown} constant angular velocity
        $\omega$ around a \emph{known} axis (fixed in body coordinates).  Minimizing 
        the least-squares objective~\eqref{eq:psiaki-lsq} subject to the constraints~\eqref{eq:spineq}
        is the first generalization of Wahba's problem posed
        in~\cite{psiaki2009generalized}, and is relevant for a nutation damped
        spin-stabilized spacecraft.
\item[Psiaki's second problem] If $\Omega(t_0)$ is unconstrained and no 
    additional assumptions are made about the moments of inertia
    of the spacecraft, we obtain the second generalization
    of Wahba's problem posed in~\cite{psiaki2009generalized}. In this setting
    the dependence of $Q(t)$ on $\Omega(t_0)$ is more complicated.
    This case is discussed further in~\cite{psiaki2012numerical} (see Section~\ref{sec:related} 
    to follow). 
\end{description}
In each case, Psiaki's formulations involve solving non-convex optimization
problems of the form in~\eqref{eq:psiaki-lsq} subject to dynamical constraints. 

\paragraph{Focus of the paper}
For the remainder of the  paper we focus on Psiaki's first problem,
because in this case $Q(t)$ only depends on the initial body angular 
velocity through $\cos(\omega t)$ and $\sin(\omega t)$.
In addition to focusing on Psiaki's first problem, we also assume that the
sampling instants $t_0,t_1,\ldots,t_N$ are equally spaced. As such we assume
there is some $\tau$ such that $t_n = n\tau$ for $n=0,1,\ldots,N$. 

This paper does not address Psiaki's more general second problem, where 
the dependence of $Q(t)$ on $\Omega(t_0)$ is significantly more complicated.
It would be very interesting if the techniques we develop can be extended 
to this more general situation. 

\paragraph{Aliasing}
Since we only observe $\omega$ via vector measurements at time instants that
are integer multiples of $\tau$, from the data alone we cannot distinguish
between spin rates at different integer multiples of $2\pi/\tau$ due to
aliasing. Hence we assume that $\omega\in [-\pi/\tau,\pi/\tau)$ so that
it is possible to determine the unknown spin-rate from the data. (We could,
alternatively, fix some $a$ rad/second and assume $\omega \in
[a,a+2\pi/\tau)$.) In a Bayesian formulation of the problem, we could interpret
this as encoding prior information on the spin rate.  

\paragraph{Reformulation} We now reformulate
Psiaki's first problem as a \probinstance{}.
Since $\|Q(t)x_n\|^2 = \|x_n\|^2$ for all $t$ and $n$, 
observe that with $t_n = n\tau$ the optimization 
problem~\eqref{eq:psiaki-lsq} can be rewritten as
\begin{align}
    \label{eq:inter}
    \min_{\substack{Q(0)\in SO(3)\\\omega\in [-\pi/\tau,\pi/\tau)}}&\; 
    \sum_{n=0}^{N}\frac{\kappa_n}{2}[\|y_n\|^2 
    - 2\langle y_n,Q(n\tau)x_n\rangle + \|x_n\|^2]\\
    \text{s.t.}\quad&
    Q(n\tau) = \begin{bmatrix} 1 & 0 & 0\\0 & \cos(n\tau\omega) & -\sin(n\tau\omega)\\
        0 & \sin(n\tau\omega) & \cos(n\tau\omega)\end{bmatrix}Q(0). 
    \label{eq:spin2}
\end{align}
Putting $\omega' = \tau\omega$, we see that this is equivalent, 
as an optimization problem, to
    \begin{equation}
        \label{eq:psiaki1}
        \max_{\substack{Q\in SO(3)\\\omega'\in [-\pi,\pi)}} \langle A_0,Q\rangle + \sum_{n=1}^{N}\left[\langle A_n,\cos(n\omega')Q\rangle
            + \langle B_n,\sin(n\omega')Q\rangle \right]
    \end{equation}
    where 
\begin{equation}
\label{eq:A0first}
A_0 = \kappa_0\,y_0 x_0^T + \begin{bmatrix} 1 & 0 & 0\\0 & 0 & 0\\0 & 0 & 0\end{bmatrix}\left(\sum_{n=1}^{N}\kappa_n\,y_n x_n^T\right)
\end{equation}
    and for $n=1,2,\ldots,N$,
\begin{equation}
\label{eq:AnBn}
A_n = \kappa_n\begin{bmatrix} 0 & 0 & 0\\0 & 1 & 0\\0 & 0 & 1\end{bmatrix}y_n x_n^T\quad\text{and}\quad
B_n = \kappa_n\begin{bmatrix} 0 & 0 & 0\\0 & 0 & 1\\0 & -1 & 0\end{bmatrix}y_n x_n^T.
\end{equation}
We have now expressed Psiaki's first problem in the general form described 
in~\eqref{eq:genform1}.

\subsection{Prior work and alternative solution methods for Psiaki's problems}
\label{sec:related}
In this section we summarize previous approaches to Psiaki's generalizations 
of Wahba's problem for spinning spacecraft. We then briefly discuss 
a simple discretization-based approach, 
implicit in the work of Psiaki and Hinks~\cite{psiaki2012numerical},
for solving Psiaki's problems globally.

Psiaki's original paper~\cite{psiaki2009generalized} describes a method to
globally solve Psiaki's first problem when two noise-free
vector measurements (sampled at distinct times) are used. In this situation
the problem reduces to finding all the solutions of the corresponding 
non-linear equations satisfied by the initial attitude and spin-rate.
This method seems quite sensitive to measurement noise, and is unable 
to exploit additional measurements to mitigate the effects of noise.
(The advantages of incorporating multiple measurements are demonstrated
in Section~\ref{sec:expts}.)

In subsequent work~\cite{hinks2011solution} Hinks and Psiaki describe an
approach to Psiaki's second problem under the assumption that the spacecraft is
axially symmetric and exactly three noise-free vector
measurements are used. In this case it is again possible to find an initial body
angular velocity $\Omega(t_0)$ and an initial attitude that are consistent with
the measurements  by solving a set of non-linear equations. They suggest different formulations of
these equations, and apply Newton's method (with possibly many different
initializations) to obtain a solution to the equations. Again this approach is
likely to be useful only when there is very little noise.

In later work~\cite{psiaki2012numerical} Psiaki and Hinks describe a method to
find local optima of Psiaki's first and second problems 
(with no additional assumptions)
by a novel alternating optimization scheme. The main idea is that for fixed
$\Omega(t_0)$, each point $Q(t_0),Q(t_1),\ldots,Q(t_N)$ on the trajectory is
linear in $Q(t_0)$. Hence if we can compute the trajectory 
$(Q(t_n))_{n=0}^{N}$ for fixed $\Omega(t_0)$ we can minimize the 
objective function of~\eqref{eq:psiaki-lsq} over $Q(t_0)$ for 
fixed $\Omega(t_0)$ by solving an instance of Wahba's problem. 
To obtain the trajectory $(Q(t_n))_{n=0}^{N}$ for fixed $\Omega(t_0)$, 
Psiaki and Hinks suggest numerically solving the rigid body equations. 
For the other part of the alternating optimization scheme, 
they employ a trust-region method to locally
optimize over $\Omega(t_0)$ for fixed $Q(t_0)$. 
As presented this problem only finds local optima for 
$\Omega(t_0)$ and $Q(t_0)$. Nevertheless this method makes very 
few assumptions, and can incorporate many measurements and so should 
behave well in the presence of measurement noise. 

A simpler, but much more naive, strategy would be to discretize
the space of $\Omega(t_0)$, solve (in parallel) the corresponding 
instance of Wahba's problem for each value of $\Omega(t_0)$,
then output the pair $(\Omega(t_0),Q(t_0))$ with the smallest cost.
This is a reasonable strategy for 
Psiaki's first problem since aliasing issues mean there is always 
an optimal $\omega$ in the interval $[-\pi/\tau,\pi/\tau)$.
A clear downside of this discretization approach when compared 
with the semidefinite 
optimization-based methods we describe in Section~\ref{sec:semidefinite}
is that it is expensive to obtain global solutions of high accuracy. 
Furthermore, the semidefinite optimization-based formulation easily 
extends to give semidefinite optimization-based formulations for more
general problems (see Section~\ref{sec:variations}) where the subproblems
for fixed $\Omega(t_0)$ do not reduce to instances of Wahba's problem.

\section{Semidefinite optimization reformulations}
\label{sec:semidefinite}

The main aim of this section is to describe how to reformulate \probclass{},
and hence Psiaki's first problem (which is a special case), as semidefinite
optimization problems. Before doing so, we briefly explain what semidefinite
optimization problems are, and what we mean by a semidefinite reformulation of
an optimization problem.  We illustrate this in Section~\ref{sec:wahba-sdp} by
giving a semidefinite reformulation of Wahba's problem that can be thought of
as a more flexible description of the $q$-method~\cite{keat1977analysis}.  In
Section~\ref{sec:psiaki-sdp} we give a semidefinite reformulation of
\probclass{}, before giving, in Section~\ref{sec:code}, pseudocode illustrating
how to implement the semidefinite optimization problems we formulate using
generic semidefinite optimization solvers.

\subsection{Semidefinite optimization}
\label{sec:sdp}
Semidefinite optimization problems are convex optimization problems
of the form
\[ \max_{x}\; \langle c,x\rangle \quad\text{s.t.}\quad
A_0 + \sum_{i=1}^{n}A_ix_i \psd 0\]
where $x\in \R^n$ is a vector of decision variables, $c\in \R^n$ represents a 
linear cost functional, the matrices $A_0,A_1,\ldots,A_n$ are symmetric 
$m\times m$ matrices. Recall that $X \psd 0$ means that the symmetric matrix 
$X$ is positive semidefinite. An expression of the form
\[ A(x) = A_0 + \sum_{i=1}^{n} A_ix_i \psd 0\]
is often called a \emph{linear matrix inequality} because it is linear in the 
decision variable $x$. 

Semidefinite optimization problems can be solved to any desired accuracy
in time polynomial in $n$ and $m$
using standard software based on interior point methods~\cite{vandenberghe1996semidefinite}. The semidefinite
optimization problems that arise in this paper have additional structure that
could be exploited to obtain even more efficient algorithms (see Section~\ref{sec:conclusion} for 
further discussion of this point). For much more
information about semidefinite optimization, 
including duality theory, numerical algorithms, and applications,  see for
example~\cite{vandenberghe1996semidefinite}.

\paragraph{Semidefinite reformulations}
Many different optimization problems arising in a variety of contexts,
including some optimization problems for which the natural formulation is not
convex, can be reformulated as semidefinite optimization problems. Given an
optimization problem, by a \emph{semidefinite reformulation} we mean a semidefinite
optimization problem such that
\begin{enumerate}
    \item the optimal value of the semidefinite optimization problem and the original
        optimization problem are the same;
    \item there is an efficient procedure to take an optimal solution to 
        the semidefinite optimization problem and produce an optimal solution to 
        the original optimization problem.
\end{enumerate}

\subsection{Wahba's problem}
\label{sec:wahba-sdp}

We illustrate the basic idea of semidefinite reformulations with the
example of solving Wahba's problem. We note that there are much better 
ways to solve Wahba's problem. The advantage of the semidefinite reformulation is 
that it can be extended to more complicated situations, such as Psiaki's first problem.
The reformulation presented in this section appears (in a more
general context) in~\cite{sanyal2011orbitopes} and is generalized to the analogous
problem where $SO(3)$ is replaced with $SO(n)$ for any $n\geq 2$
in~\cite{saunderson2014semidefinite}. (See also~\cite{forbes2014linear}
where a semidefinite \emph{relaxation} of Wahba's problem is described, as
well as conditions under which it is exact.)

Wahba's problem fits into the general form~\eqref{eq:genform1}
where $A_0 = \sum_{n=0}^{N}\kappa_n y(n\tau)x(n\tau)^T$ and
all the other terms vanish. Using the quaternion parameterization of $SO(3)$, Wahba's problem 
can be expressed as 
\begin{equation}
\label{eq:wahba-original}
\max_{Q\in SO(3)}\langle A_0,Q\rangle = \max_{q\in \H} \langle A_0,\Amap(qq^T)\rangle = \max_{q\in \H} \langle \Amap^*(A_0),qq^T\rangle.
\end{equation}
We now explain how to reformulate~\eqref{eq:wahba-original} 
as a semidefinite optimization problem following a general 
pattern that we use again in Section~\ref{sec:psiaki-sdp}.
\begin{enumerate}
    \item Rewrite the problem as the optimization of a \emph{linear} functional 
        over some set. In this case
        \[ \max_{Z} \langle \Amap^*(A_0),Z\rangle \quad\text{s.t.}\quad 
    Z\in \{qq^T: q\in \H\}.\]
    \item Replace the constraint set with the \emph{convex hull} of the constraint set. In this case
    \[ \max_{Z} \langle \Amap^*(A_0),Z\rangle \quad\text{s.t.}\quad Z \in \conv\{qq^T: q\in \H\}.\]
        This optimization problem has the same optimal value as the original
        non-convex problem because 
        the cost function is linear (see Section~\ref{sec:notation}).
    \item Describe the convex hull of the constraint set as the feasible region of a 
        semidefinite optimization problem (if possible). In this case such a description is well known~(see, e.g.,~\cite[Theorem 3]{overton1992sum}) 
        and given by
    \[ \conv\{qq^T: q\in \H\} = \{Z\in \Sym^n: Z \psd 0,\; \tr(Z) = 1\}.\]
        (This holds because if $Z\psd 0$ and $\tr(Z) =1$ then any eigendecomposition $Z = \sum_{i=1}^{n}\lambda_i q_iq_i^T$
        expresses $Z$ as a convex combination of matrices of the form $qq^T$ with $\|q\|=1$.)
\end{enumerate}
The resulting semidefinite reformulation of Wahba's problem is
\begin{equation}
    \label{eq:wahbasdp}
    \max_{Z} \;\langle \Amap^*(A_0),Z\rangle \quad\text{s.t.}\quad\tr(Z)=1,\;Z\psd 0.
\end{equation}

\paragraph{Extracting an optimal point}
Let $Q$ be an optimal solution of Wahba's problem~\eqref{eq:wahba-original}, and suppose $q$ is a 
corresponding unit quaternion, so that $Q = \Amap(qq^T)$. Then the positive semidefinite matrix 
$Z = qq^T$ is an optimum for the semidefinite reformulation of Wahba's problem~\eqref{eq:wahbasdp}.
All the optima of the semidefinite reformulation of Wahba's problem are convex combinations of 
points of the form $qq^T$ where $\Amap(qq^T)$ is optimal for the original formulation of Wahba's problem.
Under mild assumptions (such as having access to  at least two generic vector measurements) 
Wahba's problem has a unique solution $Q^\star = \Amap(qq^T)$. Whenever Wahba's problem has a unique solution
it follows that the semidefinite reformulation also has a unique solution $Z^\star = qq^T$ and we can recover 
the solution to Wahba's problem from the solution of the semidefinite relaxation by taking $\Amap(Z^\star)$.

\paragraph{Relationship with the $q$-method}
The value of the semidefinite optimization problem~\eqref{eq:wahbasdp} is the largest
eigenvalue of the Davenport matrix $\Amap^*(A_0)$. This can already be seen from~\eqref{eq:wahba-original}
and the fact that $\max_{q\in \H}\langle \Amap^*(A_0),qq^T\rangle = \max_{q\in \H}q^T\Amap^*(A_0)q = \lambda_{\textup{max}}(\Amap^*(A_0))$.
If $q$ is an eigenvector corresponding to the largest eigenvalue of $\Amap^*(A_0)$ then $Z = qq^T$ 
is an optimal solution of the semidefinite reformulation~\eqref{eq:wahbasdp}. As such, 
our reformulation is closely related to the $q$-method for solving Wahba's problem 
problem~\cite{keat1977analysis}.

\paragraph{Discussion}
Note that the transformations in the first and second steps above are merely formal and 
can be applied to essentially any optimization problem. The third step is 
non-trivial. In general it is not well understood which sets $S$ have the property
that $\conv(S)$ can be described as the feasible region of a semidefinite optimization 
problem---this is an area of active research (see, for example,~\cite{blekherman2013semidefinite}). 
One view of this paper is that it shows how to express the convex hulls of the
non-convex constraint sets appearing in certain 
joint spin-rate and attitude estimation problems as the feasible regions of 
semidefinite optimization problems. 

\subsection{Trigonometric Wahba problems}
\label{sec:psiaki-sdp}
We now show how to give semidefinite reformulations of \probclass{} (defined in~\eqref{eq:genform1}). By  
specializing to the case where $(A_n)_{n=0}^{N}$ and $(B_n)_{n=1}^{N}$ are 
given by~\eqref{eq:A0first} and~\eqref{eq:AnBn}, we obtain a 
semidefinite reformulation of Psiaki's first problem.

As in the case of Wahba's problem we use the parameterization of $SO(3)$ 
in terms of unit quaternions to rewrite \probclass{} as
\begin{equation}
    \label{eq:genquat}
        \max_{\substack{q\in \H\\\omega\in [-\pi,\pi)}} \langle \Amap^*(A_0),qq^T\rangle + \sum_{n=1}^{N} \left[\langle \Amap^*(A_n),\cos(n\omega)qq^T\rangle +
                \langle \Amap^*(B_n),\sin(n\omega)qq^T\rangle\right].
     \end{equation}
We can view this problem as the maximization of a linear functional over the set
\begin{equation}
    \label{eq:moment}
    \moment_N:=\{(qq^T,qq^T\cos(\omega),qq^T\sin(\omega),\ldots,qq^T\cos(N\omega),qq^T\sin(N\omega))\in (\Sym^4)^{2N+1}: q\in \H, \omega\in [-\pi,\pi)\}.
    \end{equation}
    As such the convexified version of~\eqref{eq:genquat} is the following optimization problem where the decision variables are 
    the $2N+1$ symmetric matrices $X_0,X_1,Y_1,\ldots,X_N,Y_N$:
\begin{multline}
    \label{eq:genconv}
    \max_{(X_n)_{n=0}^{N},(Y_n)_{n=1}^{N}}\; \langle \Amap^*(A_0),X_0\rangle + \sum_{n=1}^{N}\left[\langle \Amap^*(A_n),X_n\rangle + \langle \Amap^*(B_n),Y_n\rangle\right]\\
    \textup{subject to} \quad (X_0,X_1,Y_1,\ldots,X_N,Y_N)\in \conv(\moment_N).
\end{multline}
This problem is certainly convex, and has the same optimal value as~\eqref{eq:genform1} and~\eqref{eq:genquat}. It may not be 
immediately clear that the constraint set $\conv(\moment_N)$ has a succinct representation in terms of the feasible region 
of a semidefinite optimization problem. In fact $\conv(\moment_N)$ does have such a representation, and we now turn our attention 
to describing it. 
\paragraph{A linear matrix inequality description of $\conv(\moment_N)$}
We now describe $\conv(\moment_N)$ in terms of a linear matrix inequality, 
making use of the block matrix notation defined in Section~\ref{sec:notation}.
We establish the correctness of this description in the Appendix,
by combining standard results with a novel symmetry reduction argument.
\begin{proposition}
\label{prop:symm}
    \begin{multline} 
            \label{eq:symm}
            \conv(\moment_N) = \{(X_0,X_1,Y_1,\ldots,X_N,Y_N)\in (\Sym^4)^{2N+1}: \;\tr(X_0) = 1, \\
                \BlkToep(X_0,X_1,\ldots,X_N) + \BlkHank(Y_N,Y_{N-1},\ldots,Y_1,0,-Y_1,\ldots,-Y_{N-1},-Y_N) \psd 0\}.
    \end{multline}
\end{proposition}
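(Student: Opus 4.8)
The plan is to recognize the right-hand side of~\eqref{eq:symm} as the feasible set of a matrix-valued trigonometric moment problem, and to derive it in three stages: first reduce $\conv(\moment_N)$ to such a moment problem, then apply the classical solution in its complex Hermitian form, and finally perform a symmetry reduction that both realifies and halves the size of the resulting linear matrix inequality, producing the block Toeplitz-plus-Hankel form in the statement.

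First I would package the real data into complex blocks. Setting $Z_0 = X_0$ and $Z_n = X_n + iY_n$ for $n = 1,\ldots,N$ (so that $Z_{-n} := X_n - iY_n$), a tuple $(X_0,X_1,Y_1,\ldots,X_N,Y_N)$ lies in $\conv(\moment_N)$ exactly when there is a probability measure $\mu$ on $\H \times [-\pi,\pi)$ with $Z_n = \int qq^T e^{in\omega}\,d\mu$ for all $n$; this holds because $\moment_N$ is compact (a continuous image of a compact set, identifying $\omega = \pm\pi$), so its convex hull coincides with the set of expectations over probability measures. Pushing $\mu$ forward to a matrix-valued measure $dM(\omega) = (\int_\H qq^T\,d\mu(q\mid\omega))\,d\mu_\omega(\omega)$ on the circle, each such tuple is the trigonometric moment sequence $Z_n = \int e^{in\omega}\,dM(\omega)$ of a positive-semidefinite matrix-valued measure $M$ normalized by $\tr M([-\pi,\pi)) = \tr(X_0) = 1$. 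Conversely, any such $M$ arises from a probability measure on $\H \times [-\pi,\pi)$: writing $dM = W(\omega)\,d\nu$ with $\nu = \tr M$ and $W \psd 0$, $\tr W = 1$ $\nu$-almost everywhere, the Wahba convex-hull identity $\conv\{qq^T : q \in \H\} = \{W \in \Sym^4 : W \psd 0,\ \tr W = 1\}$ lets me disintegrate each $W(\omega)$ as $\int_\H qq^T\,d\rho_\omega$. Thus $\conv(\moment_N)$ is precisely the set of first $N+1$ trigonometric moments of trace-one positive-semidefinite matrix measures.

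Next I would apply the standard solution of the truncated matrix-valued trigonometric moment problem: a sequence $Z_0,\ldots,Z_N$ (with $Z_{-n} = Z_n^*$) arises from a positive-semidefinite matrix-valued measure on the circle if and only if the Hermitian block Toeplitz matrix $T$ with blocks $T_{jk} = Z_{j-k}$ is positive semidefinite. Sufficiency is immediate from $\sum_{j,k} v_j^* Z_{j-k} v_k = \int (\sum_j e^{ij\omega} v_j)^*\,dM(\omega)\,(\sum_k e^{ik\omega} v_k) \ge 0$, and necessity is the Carath\'eodory--Toeplitz extension theorem in its matrix form. Separating real and imaginary parts gives $T = P + iS$ with $P = \BlkToep(X_0,X_1,\ldots,X_N)$ real symmetric and $S$ the real antisymmetric block Toeplitz matrix whose $(j,k)$ block is $\operatorname{sign}(j-k)\,Y_{|j-k|}$. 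Together with $\tr(X_0) = 1$ this already establishes~\eqref{eq:symm} with the Hankel term replaced by a complex Hermitian Toeplitz constraint; it remains only to convert this to the stated real form.

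The last step, which I expect to be the crux and is the novel symmetry reduction, rewrites $T \psd 0$ as the real condition in~\eqref{eq:symm}. By the standard realification, $T = P + iS \psd 0$ if and only if the real symmetric matrix $\widetilde T = \left[\begin{smallmatrix} P & -S \\ S & P\end{smallmatrix}\right]$ of twice the size is positive semidefinite. I would then exploit the block persymmetry of Toeplitz matrices: with $J$ the block-reversal permutation (so $J = J^T = J^{-1}$) one has $JPJ = P$ and $JSJ = -S$, from which the orthogonal involution $R = \left[\begin{smallmatrix} 0 & J \\ J & 0\end{smallmatrix}\right]$ commutes with $\widetilde T$. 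Block-diagonalizing $\widetilde T$ over the $\pm 1$ eigenspaces of $R$ (via $u \mapsto \tfrac{1}{\sqrt 2}(u, \pm Ju)$) yields the two diagonal blocks $P - SJ$ and $P + SJ$, which are congruent to one another through $J$ and hence are simultaneously positive semidefinite or not. Therefore $T \psd 0$ is equivalent to the single half-size condition $P - SJ \psd 0$. A direct index computation then identifies $-SJ$: its $(j,k)$ block depends only on $j+k$ and equals $\operatorname{sign}(N-j-k)\,Y_{|N-j-k|}$, which is exactly $\BlkHank(Y_N,Y_{N-1},\ldots,Y_1,0,-Y_1,\ldots,-Y_N)$. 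Combining this with $\tr(X_0) = 1$ gives~\eqref{eq:symm}. The main obstacles are verifying that the matrix moment theorem is invoked with the correct sign and Hermitian conventions, and carrying out the symmetry reduction so that the reversal $J$ turns the antisymmetric Toeplitz part $S$ into precisely the prescribed alternating block Hankel matrix.
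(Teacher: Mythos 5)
Your overall architecture parallels the paper's proof, which also splits into (i) a characterization of $\conv(\moment_N)$ by positive semidefiniteness of the complex Hermitian block Toeplitz matrix $\BlkToep(X_0,X_1+iY_1,\ldots,X_N+iY_N)$, and (ii) a realification-plus-symmetry-reduction converting that condition into the real Toeplitz-plus-Hankel form. Your stage 3 is, up to packaging, exactly the paper's second lemma: the paper conjugates $\left[\begin{smallmatrix}\Re Z & \Im Z\\ -\Im Z & \Re Z\end{smallmatrix}\right]$ by the orthogonal matrix $\frac{1}{\sqrt{2}}\left[\begin{smallmatrix}I & -J\\ J & I\end{smallmatrix}\right]$ to exhibit two copies of $\Re Z + J\Im Z$, which is your commuting-involution eigenspace split in different clothing, and your index computation identifying $-SJ$ with the alternating block Hankel matrix is correct. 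For part (i), where the paper uses Tismenetsky's atomic decomposition of positive \emph{definite} block Toeplitz matrices followed by a relative-interior/closure argument, you substitute the truncated matrix-valued trigonometric moment problem; that is a legitimate alternative, provided the moment theorem you cite covers the semidefinite (not merely definite) case, or else you need the same closure argument the paper uses.

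The genuine gap is a real-versus-complex mismatch at the junction of your stages 1 and 2. The matrix Carath\'eodory--Toeplitz theorem produces a representing measure whose values are complex Hermitian positive semidefinite matrices---at the atomic level, terms of the form $uu^*$ with $u\in\C^4$. But your stage-1 converse, which is what converts a representing measure into a point of $\conv(\moment_N)$, rests on the Wahba identity $\conv\{qq^T:q\in\H\}=\{W\in\Sym^4: W\psd 0,\ \tr W=1\}$, which applies only to \emph{real symmetric} $W$; a non-real Hermitian $W(\omega)$ is not in this set, and atoms $uu^*$ with complex $u$ are not elements of $\{qq^T:q\in\H\}$. So the chain ``Toeplitz $\psd 0$ $\Rightarrow$ representing measure $\Rightarrow$ point of $\conv(\moment_N)$'' breaks at the second arrow as written. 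The missing idea---and precisely the step the paper carries out explicitly in its first lemma---is a symmetrization exploiting the hypothesis that the $X_n,Y_n$ are symmetric, i.e.\ that the prescribed moments satisfy $Z_n^T=Z_n$: replace $M$ by $\tfrac{1}{2}(M+M^T)$, equivalently replace each atom via $uu^*\mapsto \tfrac{1}{2}\bigl(uu^*+(uu^*)^T\bigr)=aa^T+bb^T$ for $u=a+ib$, which preserves positive semidefiniteness, the trace normalization, and all moments, and yields a real-symmetric-valued measure to which your disintegration applies. This is a short fix, but it is not cosmetic: it is the only place the symmetry of the blocks enters, and without it the inclusion of the semidefinite set into $\conv(\moment_N)$ is unproven.
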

\begin{proof}
    We provide a proof in the Appendix.
\end{proof}

\paragraph{Semidefinite reformulation in the general case}
Now that we have a semidefinite description of $\conv(\moment_N)$, we can give a semidefinite reformulation for all \probclass{}.
The following theorem explicitly describes this reformulation, which is obtained by 
replacing $\conv(\moment_N)$ in~\eqref{eq:genconv} with its semidefinite description from Proposition~\ref{prop:symm}.
\begin{thm}
    \label{thm:mainsdp}
    Let $A_0,A_1,\ldots,A_N,B_1,\ldots,B_N\in \R^{3\times 3}$. Then the \probinstance{}
\begin{equation}
    \label{eq:twthm}\max_{\substack{Q\in SO(3)\\\omega\in [-\pi,\pi)}}
    \langle A_0,Q\rangle +
    \sum_{n=1}^{N}\left[ \langle A_n,\cos(\omega n)Q\rangle + 
        \langle B_n,\sin(\omega n)Q\rangle\right] 
\end{equation}
    and the semidefinite optimization problem
    \begin{align}
    \max_{(X_n)_{n=0}^{N},(Y_n)_{n=1}^{N}}\;&\langle \Amap^*(A_0),X_0\rangle + \sum_{n=1}^{N}[\langle \Amap^*(A_n),X_n\rangle +
    \langle \Amap^*(B_n),Y_n\rangle]\label{eq:mainsdp}\\
    \textup{s.t.} &\;\; 
        \BlkToep(X_0,X_1,\ldots,X_N) + \BlkHank(Y_N,Y_{N-1},\ldots,Y_1,0,-Y_1,\ldots,-Y_{N-1},-Y_N) \psd 0\nonumber\\
        & \;\; \tr(X_0) = 1\nonumber
    \end{align}
    have the same optimal value. The set of optimal points of the semidefinite reformulation is
\begin{align}
\conv&\,\{(qq^T,qq^T\cos(\omega),qq^T\sin(\omega),\ldots,qq^T\cos(N\omega),qq^T\sin(N\omega)):\label{eq:sols}\\
     &\qquad\qquad\qquad\qquad\qquad\qquad\qquad \text{$(\omega,\Amap(qq^T))$ is an optimal point for~\eqref{eq:twthm}}\}\nonumber.
\end{align}
\end{thm}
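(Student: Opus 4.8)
The plan is to derive the theorem as a formal consequence of Proposition~\ref{prop:symm} together with the quaternion reformulation already recorded in~\eqref{eq:genquat}. For the equality of optimal values I would chain three identifications. First, the quaternion parameterization shows~\eqref{eq:twthm} and~\eqref{eq:genquat} have the same value: as $q$ ranges over $\H$, $Q=\Amap(qq^T)$ ranges over all of $SO(3)$, and the two objectives agree termwise by the adjoint identity~\eqref{eq:adjoint}. Second, since~\eqref{eq:genquat} maximizes a \emph{linear} functional of the tuple $(qq^T,qq^T\cos\omega,\ldots,qq^T\sin(N\omega))$ over the set $\moment_N$ defined in~\eqref{eq:moment}, its value equals that of the convexified problem~\eqref{eq:genconv}, because maximizing a linear functional over a set and over its convex hull give the same value (recalled in Section~\ref{sec:notation}). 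Here I would first verify that $\moment_N$ is compact, so that the maximum is attained: it is the continuous image of the compact set $\H\times[-\pi,\pi]$, where including the endpoints $\pm\pi$ does not change the image since $\cos$ and $\sin$ are $2\pi$-periodic. Third, Proposition~\ref{prop:symm} identifies $\conv(\moment_N)$ exactly with the feasible region of~\eqref{eq:mainsdp}, so~\eqref{eq:genconv} and~\eqref{eq:mainsdp} coincide as optimization problems and in particular share the common optimal value.

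For the description~\eqref{eq:sols} of the optimal set I would invoke the general principle that, for a compact set $S$ and a linear functional $\langle c,\cdot\rangle$, the maximizers over $\conv(S)$ are exactly the convex combinations of the maximizers over $S$; that is, $\arg\max_{\conv(S)}\langle c,\cdot\rangle = \conv(\arg\max_{S}\langle c,\cdot\rangle)$. The inclusion $\supseteq$ is immediate, since a convex combination of points of common value $M:=\max_{S}\langle c,\cdot\rangle$ again has value $M$. For $\subseteq$, I would write any maximizer $z\in\conv(S)$ as a finite convex combination $z=\sum_i\lambda_i s_i$ with $s_i\in S$ and $\lambda_i>0$ (possible by Carath\'eodory's theorem); then $M=\langle c,z\rangle=\sum_i\lambda_i\langle c,s_i\rangle\le M$ forces $\langle c,s_i\rangle=M$ for every $i$, so each $s_i$ is a maximizer over $S$ and hence $z\in\conv(\arg\max_{S})$.

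Applying this with $S=\moment_N$ and $c$ the cost functional of~\eqref{eq:genquat}, I would finally identify the maximizers over $\moment_N$. By the termwise agreement of the objectives noted above (again via~\eqref{eq:adjoint}), the generator of $\moment_N$ indexed by $(q,\omega)$ is a maximizer precisely when its objective value equals the common optimal value, which happens exactly when $(\omega,\Amap(qq^T))$ is optimal for~\eqref{eq:twthm}. Taking the convex hull of this set of generators yields~\eqref{eq:sols}. The $q\leftrightarrow -q$ sign ambiguity is harmless, since every quantity in sight depends on $q$ only through $qq^T$.

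Since Proposition~\ref{prop:symm} is assumed, the argument for this theorem is largely bookkeeping, and the genuine difficulty is contained in the deferred proof of that proposition, namely the exact linear matrix inequality description of $\conv(\moment_N)$ via the block Toeplitz-plus-Hankel structure and the symmetry reduction. Within the theorem proper, the points that most warrant care are the compactness of the half-open parameterization (to guarantee the maximum is attained, so that $\arg\max_{\moment_N}$ is nonempty) and the clean identification of the optimal face through the linear-functional-over-convex-hull lemma; I expect these, rather than any single hard computation, to be the main things to get right.
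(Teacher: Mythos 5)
Your proof is correct and takes essentially the same route as the paper: the paper likewise obtains Theorem~\ref{thm:mainsdp} by chaining the quaternion reformulation~\eqref{eq:genquat}, the linear-functional-over-convex-hull principle, and the substitution of the description of $\conv(\moment_N)$ from Proposition~\ref{prop:symm}, with all of the genuine difficulty deferred to that proposition's proof in the Appendix. The details you add---compactness of $\moment_N$ via periodicity of $\cos$ and $\sin$, and the lemma that maximizers over $\conv(S)$ are exactly convex combinations of maximizers over $S$---are points the paper leaves implicit, and your treatment of them is sound.
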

\paragraph{Extracting an optimal solution}
If $N\geq 2$ we expect a generic \probinstance{} to have a unique optimal 
point~$(\omega^\star,Q^\star)$~\cite{psiaki2009generalized}. In that case the 
    semidefinite reformulation~\eqref{eq:mainsdp} has a unique optimal point denoted
    $(X_0^\star,X_1^\star,Y_1^\star,\ldots,X_N^\star,Y_N^\star)$ from which we can recover $(\omega^\star,Q^\star)$ via
    \begin{equation}
        \label{eq:relation}
        Q^\star = \Amap(X_0^\star),\quad\cos(\omega^\star) = \tr(X_1^\star)\quad\text{and}\quad
            \sin(\omega^\star) = \tr(Y_1^\star).
        \end{equation}

% \begin{proof}
%     We obtain this semidefinite reformulation by 
%     % assertions about optima.
% \end{proof}
% % remark about unique optimal solutions
    
%     Let $(\omega,Q) = (\omega,\Amap(qq^T))$ be an optimal solution for an instance of a \probinstance{}.
%     Then the point 
%     \[ (qq^T,qq^T\cos(\omega),qq^T\sin(\omega),\ldots,qq^T\cos(N\omega),qq^T\sin(N\omega))\] 
%     is an optimum for the semidefinite reformulation~\eqref{eq:mainsdp}. All the other optima of
%     the semidefinite reformulation are convex combinations of 
%     points of this form arising from optima of the \probinstance{}. If the \probinstance{} of interest has 
%     a unique solution $(\omega^\star,Q^\star) = (\omega^\star,\Amap(qq^T))$ (as is the case generically whenever $N\geq 2$~\cite{psiaki2009generalized}) 
%     then the semidefinite reformulation has a unique optimal solution 
%     $(X_0^\star,X_1^\star,Y_1^\star,\ldots,X_N^\star,Y_N^\star)$ and these are related by 
%         \[ Q^\star = \Amap(X_0^\star),\quad\cos(\omega^\star) = \tr(X_1^\star)\quad\text{and}\quad
%             \sin(\omega^\star) = \tr(Y_1^\star).\] 

\subsection{Pseudocode}
\label{sec:code}

In this section we describe code to implement our semidefinite
optimization-based formulations~\eqref{eq:mainsdp}
of \probclass{}. Our motivation for doing this is to show that it
is quite straightforward to use standard 
numerical routines to solve the semidefinite optimization problems
that appears in this paper. 

The code is expressed in a parsing language called
YALMIP~\cite{yalmip} that runs under MATLAB. Internally, 
YALMIP reformulates the human-readable description of the 
optimization problem we specify into a standard format, then calls 
a numerical solver for semidefinite optimization problems 
(we used MOSEK~\cite{andersen2000mosek} version 7 for these 
experiments) to solve the optimization problem. 

In what follows, we assume we have functions
\begin{description}
    \item[\underline{\small\texttt{A\_map}}] implementing the linear 
        map $\Amap$  taking a $4\times 4$ symmetric matrix and 
        returning a $3\times 3$ matrix according to~\eqref{eq:amap};
    \item[\underline{\small{\texttt{block\_toeplitz}}}] implementing the linear map 
        % (defined in~\eqref{eq:blktoep}) 
        $(X_0,X_1,\ldots,X_N)\mapsto \BlkToep(X_0,X_1,\ldots,X_N)$ taking a 
        $4\times 4 \times (N+1)$ array and returning a $4(N+1)\times 4(N+1)$ 
        matrix according to~\eqref{eq:blktoep};
    \item[\underline{\small{\texttt{block\_hankel}}}] implementing the linear map 
        % (defined in~\eqref{eq:blkhank})
        $(Y_1,Y_2,\ldots,Y_N) \mapsto \BlkHank(-Y_N,\ldots,-Y_1,0,Y_1,\ldots,Y_N)$
        taking a $4\times 4 \times N$ array and returning a $4(N+1)\times 4(N+1)$
        matrix according to~\eqref{eq:blkhank}.
\end{description}

We declare variables in YALMIP using the {\small\texttt{sdpvar}} command. 
\begin{align*}
    \mbox{\small\texttt{1:}}\qquad& \mbox{\small\texttt{X = sdpvar(4,4,N+1,'symmetric');}}\\
    \mbox{\small\texttt{2:}}\qquad& \mbox{\small\texttt{Y = sdpvar(4,4,N,'symmetric');}}\\
    \intertext{For example {\small\texttt{Y}} is a $4\times 4 \times N$ array of variables
        with each slice {\small\texttt{Y(:,:,n)}} being a symmetric matrix. 
        We specify constraints by constructing an array of constraints expressed 
        in a very natural way. We express the two constraints in~\eqref{eq:mainsdp} by}
        % could start with the constraint $\tr(X_0) = 1$, which we express as
% (indexing from one following MATLAB's conventions).} 
        \mbox{\small\texttt{3:}}\qquad& \mbox{\small\texttt{K = [trace(X(:,:,1))==1, \underline{block\_toeplitz}(X) + \underline{block\_hankel}(Y) >= 0];}}\\
    \intertext{where we have indexed from 1 following MATLAB's conventions. Note that in YALMIP this latter inequality is automatically 
interpreted in the positive semidefinite sense since the matrix on the left hand side is structurally symmetric.}
\intertext{Suppose the variables {\small\texttt{A}} and {\small\texttt{B}} are respectively $4\times 4 \times (N+1)$
    and $4\times 4 \times N$ arrays with {\small\texttt{A(:,:,n+1)}} being $\Amap^*(A_{n})$ and
    {\small\texttt{B(:,:,n)}} being $\Amap^*(B_n)$. 
        Then we can solve the semidefinite optimization problem~\eqref{eq:mainsdp} with the single line}
        \mbox{\small\texttt{4:}}\qquad&\mbox{\small\texttt{solvesdp(K,-(A(:)'*X(:)\;+\;B(:)'*Y(:)));}}\\
        \intertext{which calls a numerical solver with the constraint set {\small\texttt{K}} and  the cost function
            {\small\texttt{-(A(:)'*X(:) + B(:)'*Y(:))}} (with the minus sign because minimization is the default). Assuming that there is a unique solution to the 
     non-convex problem we can extract the optimal rotation matrix $Q$ and optimal $\omega$ with} 
     \mbox{\small\texttt{5:}}\qquad& \mbox{\small\texttt{Q\_opt = \underline{A\_map}(double(X(:,:,1)));}}\\
     \mbox{\small\texttt{6:}}\qquad&\mbox{\small\texttt{omega\_opt = atan2(trace(double(Y(:,:,1))),trace(double(X(:,:,2))));}}
 \end{align*}

\section{Variations}
\label{sec:variations}
In Section~\ref{sec:semidefinite} we formulated Psiaki's first problem 
as a semidefinite optimization problem by showing how to 
express the convex hull of $\moment_{N}$ in terms of 
linear matrix inequalities. This description of $\moment_{N}$
also allows us to take a semidefinite optimization-based 
approach to many variations on Psiaki's first problem. 
In this section we illustrate the possibilities in this direction
with one simple example---
a variant on Psiaki's problem where we assume the measurement
errors are bounded, and incorporate this additional information
into the formulation.

\subsection{Psiaki's first problem with bounded measurement errors}
\label{sec:psiakivar}
Using the notation from Section~\ref{sec:jas}, 
suppose we know that the error between the measured direction $y_n$ and 
the true direction $Q(n \tau)x_n$ is bounded in each coordinate, satisfying
\begin{equation}
    \label{eq:box}
    -\epsilon \leq y_n - Q(n\tau)x_n \leq \epsilon.
\end{equation}
Here $\epsilon = \begin{bmatrix} \epsilon_1 & \epsilon_2 &\epsilon_3\end{bmatrix}^T$ 
is a vector of positive constants that are not necessarily equal, and the inequalities
in~\eqref{eq:box} are to be interpreted element-wise.
Adding these constraints to the formulation of Psiaki's first problem 
we obtain the following variant:
\begin{align}
    \min_{\substack{Q(0)\in SO(3)\\\omega\in [-\pi/\tau,\pi/\tau)}} &
        \sum_{n=0}^{N}\frac{\kappa_n}{2}\|y_n -
    Q(n\tau)x_n\|_2^2\label{eq:psiakivar-lsq}\\
    \text{s.t.} & \quad -\epsilon \leq y_n - Q(n\tau)x_n \leq \epsilon
    \quad\text{for $n=0,1,\ldots,N$}.\nonumber
\end{align}
Here, as in Section~\ref{sec:jas}, $Q(n\tau)$ is related 
to $Q(0)$ via~\eqref{eq:spin2} and so putting $\omega' = \omega \tau$, 
\begin{align*}
Q(n\tau) & = \begin{bmatrix} 1 & 0 & 0\\0 & \cos(n\omega') & -\sin(n\omega')\\0 & \sin(n \omega') & \cos(n \omega')\end{bmatrix}Q\\
         & = 
\left[\begin{smallmatrix} 1 & 0 & 0\\0 & 0 & 0\\0 & 0 & 0\end{smallmatrix}\right]\Amap(qq^T)
+ \left[\begin{smallmatrix} 0 & 0 & 0\\0 & 1 & 0\\0 & 0 & 1\end{smallmatrix}\right]\Amap(qq^T\cos(n\omega')) + 
\left[\begin{smallmatrix} 0 & 0 & 0\\0 & 0 & -1\\0 & 1 & 0\end{smallmatrix}\right]\Amap(qq^T\sin(n\omega')).
\end{align*}
Since the objective function of~\eqref{eq:psiakivar-lsq} is identical to the objective function of Psiaki's first problem~\eqref{eq:psiaki-lsq}, using the notation 
and manipulations of Sections~\ref{sec:jas} and~\ref{sec:semidefinite}
we can rewrite the variant of Psiaki's first problem as
\begin{align}
    \max_{\substack{q\in \H\\\omega' \in [-\pi,\pi)}} &
    \langle \Amap^*(A_0),qq^T\rangle + \sum_{n=1}^{N}[\langle \Amap^*(A_n),qq^T\cos(n\omega')\rangle + \langle \Amap^*(B_n),qq^T\sin(n\omega')\rangle]\\ 
\label{eq:psiakivar-linear}\\
\textup{s.t.}-\epsilon \leq& y_n - \left(\left[\begin{smallmatrix} 1 & 0 & 0\\0 & 0 & 0\\0 & 0 & 0\end{smallmatrix}\right]\!\Amap(qq^T)
+ \left[\begin{smallmatrix} 0 & 0 & 0\\0 & 1 & 0\\0 & 0 & 1\end{smallmatrix}\right]\!\Amap(qq^T\cos(n\omega')) + 
\left[\begin{smallmatrix} 0 & 0 & 0\\0 & 0 & -1\\0 & 1 & 0\end{smallmatrix}\right]\!\Amap(qq^T\sin(n\omega'))\right)x_n\leq \epsilon\label{eq:varconst}\\
                            &\qquad\qquad\qquad\qquad\qquad\qquad\qquad\qquad\qquad\qquad\qquad\qquad\qquad\qquad\text{for $n=0,1,\ldots,N$.}\nonumber\end{align}
 
Observe that we have rewritten the problem as the maximization of a linear functional over the constraint set defined by
\begin{align*}
    S & = \bigg\{(qq^T,qq^T\cos(\omega),qq^T\sin(\omega),\ldots,qq^T\cos(N\omega),qq^T\sin(N\omega)):q\in \H, \omega \in [-\pi,\pi), \\
      &\quad-\epsilon \leq y_n - \left(\left[\begin{smallmatrix} 1 & 0 & 0\\0 & 0 & 0\\0 & 0 & 0\end{smallmatrix}\right]\Amap(qq^T)
+ \left[\begin{smallmatrix} 0 & 0 & 0\\0 & 1 & 0\\0 & 0 & 1\end{smallmatrix}\right]\Amap(qq^T\cos(n\omega)) + 
\left[\begin{smallmatrix} 0 & 0 & 0\\0 & 0 & -1\\0 & 1 & 0\end{smallmatrix}\right]\Amap(qq^T\sin(n\omega))\right)x_n\leq \epsilon\\
                                                 &\qquad\qquad\qquad\qquad\qquad\qquad\qquad\qquad\qquad\qquad\qquad\qquad\qquad\qquad\text{for $n=0,1,\ldots,N$}\bigg\}.
           \end{align*}
           This set $S$ is the intersection of $\moment_{N}$ with the additional constraints~\eqref{eq:varconst} coming 
           from incorporating the knowledge that the measurement errors 
           satisfy the explicit deterministic bounds described
           in~\eqref{eq:box}.
\subsection{A semidefinite relaxation}
\label{sec:varsdp}
Recall from Section~\ref{sec:semidefinite} that if we could exactly describe 
$\conv(S)$ in terms of linear matrix inequalities that are not too large, 
we could obtain a semidefinite reformulation of this problem that can be 
solved efficiently. Unfortunately we do not know of such a concise
description of $\conv(S)$, and conjecture that no such concise
description exists for all choices of the $x_n$ and $y_n$. 
 
Instead, a natural general approach is to construct a \emph{semidefinite relaxation} of $\conv(S)$. By this we mean
a convex set $C$ such that 
\begin{enumerate}
    \item $C \supseteq \conv(S)\supseteq S$ and
    \item $C$ has a simple description in terms of linear matrix inequalities.
\end{enumerate}
One choice would be to take $C$ to be the convex set
\begin{align*} 
    C &= \bigg\{(X_0,X_1,Y_1,\ldots,X_N,Y_N)\in \conv\,\moment_N:\\
      &
-\epsilon \leq y_n - \left(\left[\begin{smallmatrix} 1 & 0 & 0\\0 & 0 & 0\\0 & 0 & 0\end{smallmatrix}\right]\Amap(X_0)
+ \left[\begin{smallmatrix} 0 & 0 & 0\\0 & 1 & 0\\0 & 0 & 1\end{smallmatrix}\right]\Amap(X_n) + 
\left[\begin{smallmatrix} 0 & 0 & 0\\0 & 0 & -1\\0 & 1 & 0\end{smallmatrix}\right]\Amap(Y_n)\right)x_n\leq \epsilon 
\quad\text{for $n=0,1,\ldots,N$}\bigg\}.
\end{align*}
One can check that while $C$ is, in general, strictly larger than $\conv(S)$, it can be expressed using linear matrix inequalities (since $C$ is 
obtained by adding linear inequalities to $\conv(\moment_N)$ which has a linear matrix inequality description from Proposition~\ref{prop:symm}). 

By optimizing over $C$ rather than $S$
we obtain the following \emph{semidefinite relaxation} of the optimization problem
\begin{align}
    \max_{(X_n)_{n=0}^{N},(Y_n)_{n=1}^{N}}&\;\langle \Amap^*(A_0),X_0\rangle  + \sum_{n=0}^{N}[\langle \Amap^*(A_n),X_n\rangle + \langle \Amap^*(B_n),Y_n\rangle]
\label{eq:varsdp}\\
\quad\text{s.t.}&\quad (X_0,X_1,Y_1,\ldots,X_N,Y_N)\in C.\nonumber
\end{align}
When we solve this semidefinite relaxation, if the solution $(X_0^\star,X_1^\star,Y_1^\star,\ldots,X_N^\star,Y_N^\star)\in C$, returned 
by the solver, is actually in $S$, then it is a solution of the original non-convex problem~\eqref{eq:psiakivar-linear}
we are trying to solve. In this case it is typical to say that the semidefinite relaxation 
is \emph{exact} for this instance. 

If $(X_0^\star,X_1^\star,Y_1^\star,\ldots,X_N^\star,Y_N^\star) \notin S$, we have not solved the non-convex problem, 
but can still conclude that 
the value of the objective function at this point is an upper bound on the 
optimal value of the original non-convex maximization problem~\eqref{eq:psiakivar-linear}. 
Such a bound can be used, for example, to assess the quality (in terms of 
the objective function) of any feasible point obtained, for instance,
by a local optimization method.
 
\subsection{Numerical experiments}
\label{sec:expts}
In this section we describe the results of two simple numerical experiments to illustrate solving  
Psiaki's first problem using semidefinite optimization, as well as solving the semidefinite 
relaxation of the variant on Psiaki's problem discussed in Sections~\ref{sec:psiakivar} and~\ref{sec:varsdp}. 
 
For all experiments we use the same parameters as in Psiaki's truth-model simulation 
in~\cite{psiaki2009generalized}---the true spin period is $45.32$ seconds 
(so the true spin-rate is $\omega = 0.1386$ radians per second),
the sampling period is $\tau = 7.7611$ seconds per sample, 
and the initial attitude is $Q(0) = I$. The attitude dynamics are described by~\eqref{eq:spineq}.

\paragraph{Solving Psiaki's first problem}
In the first experiment we solve Psiaki's first problem using the semidefinite 
reformulation~\eqref{eq:mainsdp}. In particular we repeat the 
following experiment $T=1000$ times:
\begin{enumerate}
    \item Sample reference directions $x_0,x_1,\ldots,x_{10}$ uniformly on the sphere.
    \item For $n=0,1,2\ldots,N$, sample measurements $y_n$ 
        uniformly distributed on the intersection of the 
        unit sphere and the region
    \begin{equation}
    \label{eq:boxexpt}
-\epsilon \leq y_n - Q(t_n)x_n \leq \epsilon
\end{equation}
    where $\epsilon = \begin{bmatrix}0.5 & 0.5 & 0.05\end{bmatrix}^T$ (by sampling uniformly on the sphere and 
        rejecting those samples not in the box-shaped region). 
        This corresponds to measurements that
        are very accurate along one axis, but quite inaccurate in other directions. 
    \item For $N=2,3,\ldots,10$, use the reference directions $x_0,x_1,\ldots,x_N$ and measurements
        $y_0,y_1,\ldots,y_N$ and solve
        the semidefinite optimization reformulation of Psiaki's first problem~\eqref{eq:mainsdp}.
\end{enumerate}
We note that although the data are generated from a model where the measurement errors satisfy
the explicit bounds~\eqref{eq:boxexpt}, we do not exploit this in our solution method. Also, to get a sense of the measurement 
errors introduced, we note that under the noise model we have adopted, the angle 
between $y_n$ and $Q(t_n)x_n$ over all samples was at most $41.1$ degrees, and on average $16.8$ degrees.

The average angular error (in degrees) between the estimate of the initial attitude and the true initial attitude 
is indicated by cross-shaped markers in Figure~\ref{fig:Qopterr}. Given an estimate $\hat{Q}$ of the true initial attitude
$Q(0) = I$, the angular error $\theta$ satisfies $\textup{tr}(\hat{Q}^TQ(0)) = 2\cos(\theta)+1$. Hence
we compute the angular error via $|\cos^{-1}[\textup{tr}(\hat{Q}^TQ(0))-1)/2]|$. The corresponding average error in 
the spin-rate estimate $\hat{\omega}$ is computed by taking the mean of $|\hat{\omega} - \omega|$ over all trials
and is indicated by cross-shaped markers in Figure~\ref{fig:omegaerr}.
It is clear that as more vector measurements are used (i.e.\ as $N$ increases) 
the estimates improve, justifying using more than the minimum number of measurements required for the 
optimization problem to have a unique optimum. 

\paragraph{Solving the variant with bounded measurement errors}
In the second experiment we use exactly the same data as for the first experiment. This time, 
instead of solving the semidefinite reformulation of Psiaki's first problem, we solve the 
semidefinite relaxation~\eqref{eq:varsdp} of the variant of Psiaki's first problem with bounded
measurement errors. This estimation method explicitly makes use of the fact that the measurement
errors satisfy~\eqref{eq:boxexpt}. 

\begin{figure}[h]
    \begin{center}
        % make the figure in tikz based on your data.
        \subfigure[Angular errors in the initial attitude]{\label{fig:Qopterr}
        \includegraphics[trim = 52mm 172mm 90mm 40mm, clip]{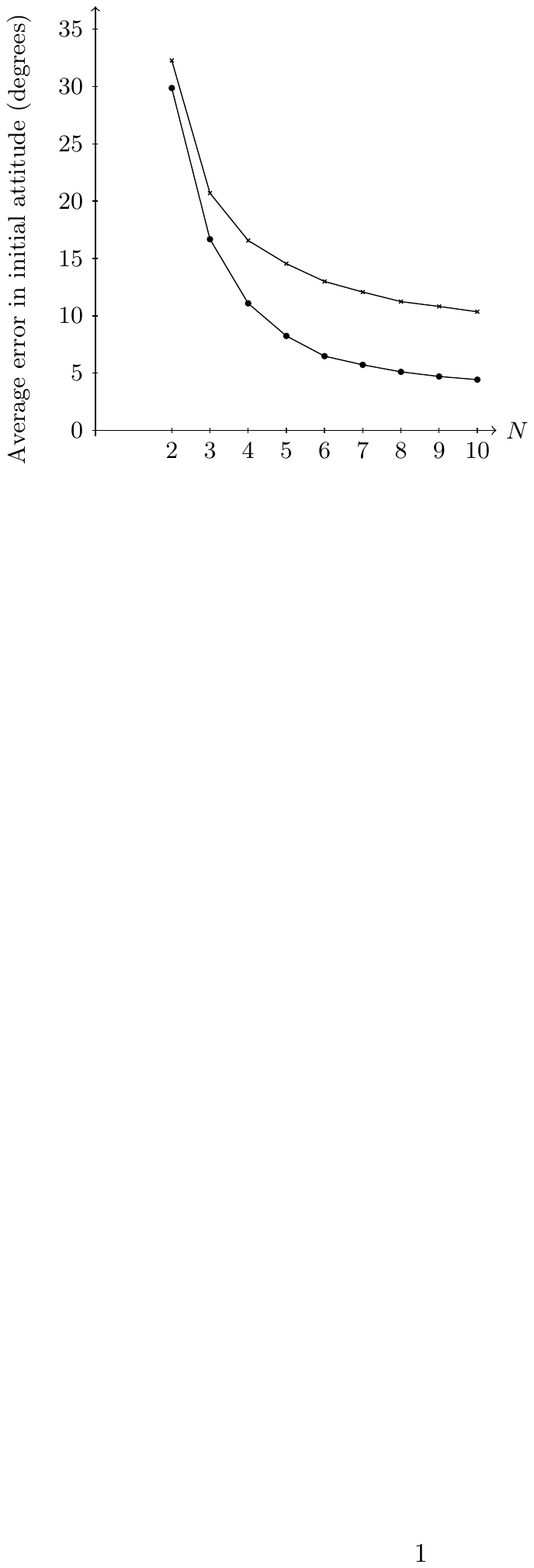}
        }
        \subfigure[Error in the spin-rate estimate]{\label{fig:omegaerr}
            \includegraphics[trim = 52mm 172mm 90mm 40mm, clip]{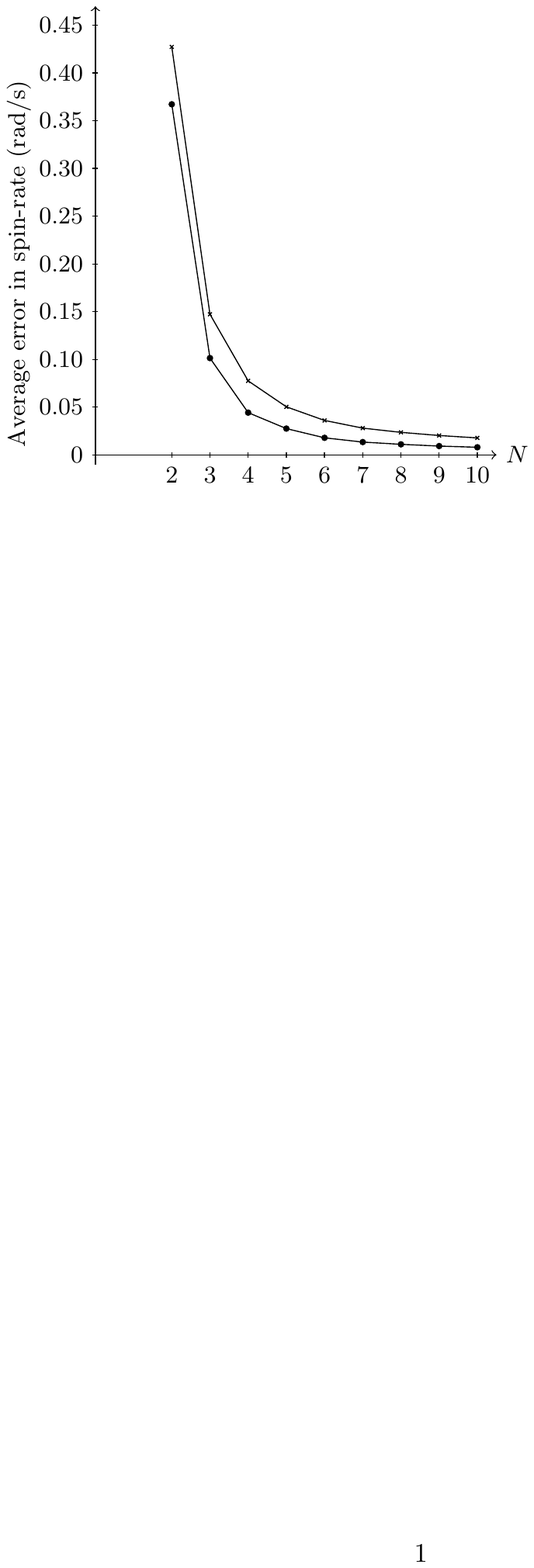}
        }
    \end{center}
    \caption{\label{fig:exptresults}Results from the experiment described in Section~\ref{sec:expts}. Figure~\ref{fig:Qopterr}
    shows the average (over $1000$ random trials) error in estimating the initial attitude using $N+1$ measurements
    by solving Psiaki's first problem (cross-shaped markers) and the semidefinite relaxation of the bounded error variant described in Section~\ref{sec:varsdp} 
    (dot-shaped markers).
Similarly Figure~\ref{fig:omegaerr} shows the average error in estimating the spin-rate for the same experiment.}
\end{figure}

As discussed in Section~\ref{sec:varsdp}, and unlike the case for the semidefinite 
reformulation of Psiaki's first problem itself,  when solving the semidefinite relaxation of the bounded error 
variant of Psiaki's first problem there is no guarantee that the relaxation will be exact. In other words, 
we do not know, in advance, whether the solution to the semidefinite optimization problem corresponds to 
a solution of the original non-convex problem~\eqref{eq:psiakivar-linear}. On the other hand, after solving 
the semidefinite optimization problem we can determine whether or not this is the case by checking whether
the solution is feasible for the original problem. The 
number of trials for which the semidefinite relaxation was exact is 
listed in Table~\ref{tab:exactness} for each $N=2,3,\ldots,10$. 

For this experiment, the average angular error in the initial attitude estimates and the average error in the spin-rate 
estimates are indicated by dot-shaped markers in Figures~\ref{fig:Qopterr} and~\ref{fig:omegaerr}.
If the semidefinite relaxation was exact we compute the errors in these quantities in the same 
way as for the previous experiment. If the semidefinite relaxation was not exact, to be as conservative
as possible we take the error to be the maximum possible value: $180$ degrees for the initial attitude error and 
$\pi$ radians/second for the spin-rate error.

The results in Figures~\ref{fig:Qopterr} and~\ref{fig:omegaerr} show that by incorporating explicit  
bounds on the measurement errors (if known) into the semidefinite optimization framework, significantly 
better estimates of the initial attitude can be obtained. Furthermore Table~\ref{tab:exactness}
indicates that the semidefinite relaxation was indeed exact on many of our random trials, suggesting 
that the semidefinite relaxation approach may be well suited to tackling
at least this variant on Psiaki's first problem, and perhaps others.

% (see Figure~\ref{fig:Qopterr} and~\ref{fig:omegaerr}).
\begin{table}
    \begin{center}
    \begin{tabular}{c|ccccccccc}
        $N$ & 2 & 3 & 4 & 5 & 6 & 7 & 8 & 9 & 10\\\hline
        $T_\textup{exact}$ (/1000) & 842 & 816 & 867 & 918 & 948 & 958 & 965 & 969& 973
    \end{tabular}
\end{center}
\caption{\label{tab:exactness}$T_\textup{exact}$ is the number of random experiments (as described in Section~\ref{sec:expts})
    for which the semidefinite relaxation for the variant on Psiaki's first problem~\eqref{eq:varsdp}
    was exact. Here $N+1$ is the number of vector measurements used.}
\end{table}

\section{Future directions}
\label{sec:directions}

We briefly comment on possible future research directions based on the work in the present paper.

\subsection{Numerical algorithms}

The semidefinite reformulation of Psiaki's first problem~\eqref{eq:mainsdp} has
a very specific structure. This structure could be exploited to develop
numerical algorithms for its solution (as well as the solution for variants on
the problem) that are much faster than the generic interior-point algorithms we
used for our experiments.  Indeed semidefinite optimization problems with a
similar structure arise in problems related to the Kalman-Yakubovich-Popov (KYP) 
lemma in robust control and in that context numerous specialized algorithms have been developed for
their solution (see, e.g.,~\cite{liu2007low,genin2003optimization,kao2007new}).
Furthermore, great gains can be made by producing optimized low-level code for
a particular family of convex optimization problems.  An excellent example of
this is the code-generation software CVXGEN which focuses on linear and convex
quadratic programs~\cite{mattingley2012cvxgen}. 

\subsection{Further variants}
\label{sec:fvariations}
A semidefinite reformulation of a problem is particularly useful because it can
be combined in many ways with other semidefinite optimization primitives to
yield more problems that can be solved in the semidefinite optimization
framework. In Section~\ref{sec:variations} we discussed a variation on Psiaki's
first problem that had bounds on the angular noise. In this case it was
straightforward to formulate a semidefinite relaxation using our semidefinite
reformulation of Psiaki's first problem. 

Another natural variation that could be approached this way would be to obtain
semidefinite relaxations of Psiaki's first problem that are robust to
uncertainty in certain model parameters. A similar idea has been carried out in
detail for Wahba's problem by Ahmed et al.~\cite{ahmed2012semidefinite}.  They
extended the semidefinite formulation of Wahba's
problem~\cite{sanyal2011orbitopes} to a variant that is robust to uncertainty
in certain parameters, such as the reference directions.  As suggested by Ahmed
at al.~this could be useful when using magnetometer measurements together with
a low-order magnetic field model.

\subsection{Psiaki's second problem}

It would be interesting to try to take a similar approach to the one taken in
the present paper to related problems, such as Psiaki's second problem. To do
so we would need to give a semidefinite description (or perhaps a relaxation)
of
\begin{equation}
    \label{eq:convP2} \conv_{Q(t_0)\in SO(3),\Omega(t_0)}\,
    \{(Q(t_0),Q(t_1),\ldots,Q(t_N)): Q(t) =
        \Phi(t-t_0;\Omega(t_0))Q(t_0)\quad\text{for all $t\geq t_0$}\}.
\end{equation}
Given this, we note that the objective function~\eqref{eq:psiaki-lsq} can be
rewritten as the maximization of a linear functional over the convex hull
described in~\eqref{eq:convP2}.

A more modest goal along similar lines might be to discretize the differential
equation~\eqref{eq:ode} and try to compute the convex hull (over all initial
conditions $Q(t_0),\Omega(t_0)$) of an appropriately subsampled trajectory of
the associated difference equation for the attitude variables. 
This approach of convexifying a problem based on discretized dynamics
would, in a sense, be a convex analogue of the methods proposed 
for Psiaki's second problem in~\cite{psiaki2012numerical}.

\section{Conclusion}
\label{sec:conclusion}
We have shown how Psiaki's generalization of Wahba's problem to the case of a
spacecraft spinning around a fixed axis at an unknown rate can be exactly
reformulated as a semidefinite optimization problem.  Such convex optimization
problems can be solved globally using standard methods for semidefinite
optimization. As suggested by Psiaki when formulating his generalizations of
Wahba's problem~\cite{psiaki2009generalized}, our solutions to these
generalizations of Wahba's problem could be used to initialize standard
extended Kalman filter-based methods for attitude estimation.

Furthermore, we have illustrated how to use our reformulation of
Psiaki's first problem to construct semidefinite relaxations of a more
complicated variant on the problem. Our numerical experiments with a variant
that includes explicit bounds on the measurement errors suggest that
incorporating additional information into the formulation can improve the
estimation errors. Our results also suggest the semidefinite relaxation
approach we propose, although not exact in general, often computes solutions to
the original non-convex variations of Psiaki's problem that we ultimately are
aiming to solve.
 
\appendix

\section{Proofs}
\label{app:pfs}

In this appendix we prove Proposition~\ref{prop:symm}. We split the proof into
two parts, given by Lemmas~\ref{lem:herm} and~\ref{lem:realify} below.
Together these clearly imply Proposition~\ref{prop:symm}.
In what follows we extend the notation $\BlkToep(T_0,T_1,\ldots,T_N)$ defined
in~\eqref{eq:blktoep} to include the case where $T_1,\ldots,T_N$ are $d\times
d$ complex matrices, $T_0$ is a $d\times d$ Hermitian matrix, and all
transposes of real matrices are replaced with conjugate transposes, denoted $A\mapsto A^*$,
of complex matrices.

Lemma~\ref{lem:herm}, to follow,
is a slight modification of the fact that any Hermitian positive semidefinite
block-Toeplitz matrix admits a decomposition as a sum of rank one positive
semidefinite block-Toeplitz matrices. This fact may be more familiar in its dual form as
the matrix spectral factorization (or Fej\'{e}r-Riesz) theorem~(see, e.g.,~\cite{rosenblatt1958multi}). 
This classical result says that any Hermitian matrix-valued function $S(e^{i\omega}) = \sum_{n=-N}^{N}S_ne^{in\omega}$
that is positive semidefinite for all $\omega$ has a factorization as 
$S(e^{i\omega}) = W(e^{i\omega})^*W(e^{i\omega})$ where $W(e^{i\omega})$ has the form
$W(e^{i\omega}) = \sum_{n=0}^{N}W_n e^{in\omega}$. This result can also be interpreted 
as saying that non-negative functions of the form $(z,\omega)\mapsto z^*S(e^{i\omega})z$, with $S(e^{i\omega})$ as before, 
are sums-of-squares~\cite{aylward2007explicit}.
% We state a version of this as
% Theorem~\ref{thm:extrays}, and note that this is equivalent to the matrix
% version of the Fej\'{e}r-Riesz theorem~\cite{rosenblatt1958multi}. 
\begin{thm}[Tismenetsky~\cite{tismenetsky1993matrix}]
    \label{thm:extrays}
If $\BlkToep(T_0,T_1,\ldots,T_N) \pd 0$ then there are $u_k\in \C^4$,
$\omega_{k}\in [-\pi,\pi)$ and $\lambda_k > 0$ for $k=1,2,\ldots,4(N+1)$ such
    that 
\[ \BlkToep(T_0,T_1,\ldots,T_N) = \sum_{k=1}^{4(N+1)}\lambda_k
    \BlkToep(u_k u_k^*,u_ku_k^*e^{i\omega_k},\ldots,u_ku_k^*e^{iN\omega_k}).\]
Consequently $T_j = \sum_{k=1}^{4(N+1)}\lambda_k u_ku_k^*e^{ij\omega_k}$ for
$j=0,1,\ldots,N$. 
\end{thm}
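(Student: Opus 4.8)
The plan is to realise the matrices on the right-hand side as the generators of the positive semidefinite block-Toeplitz cone, to show that this cone is exactly the cone they generate, and finally to sharpen the number of generators needed down to the matrix size $d(N+1)$ (here $d=4$). First I would record the algebra of the generators. For $\theta\in[-\pi,\pi)$ and $u\in\C^{d}$ put $v(\theta,u)=(u,e^{i\theta}u,\ldots,e^{iN\theta}u)\in\C^{d(N+1)}$. A direct block computation gives $v(\theta,u)v(\theta,u)^*=\BlkToep(uu^*,e^{-i\theta}uu^*,\ldots,e^{-iN\theta}uu^*)$, so---after the harmless replacement $\theta\mapsto-\theta$, which does not change the range of the frequencies---the summands in the theorem are exactly the rank-one matrices $v(\theta,u)v(\theta,u)^*$. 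In particular each is block-Toeplitz and positive semidefinite, and reading off block entries of an identity $M=\sum_k\lambda_k v(\theta_k,u_k)v(\theta_k,u_k)^*$ recovers the moment relation $T_j=\sum_k\lambda_k e^{-ij\theta_k}u_ku_k^*$. So the theorem asserts that every $M=\BlkToep(T_0,\ldots,T_N)\pd 0$ admits such a conic decomposition with $\lambda_k>0$ and at most $d(N+1)$ terms.

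For existence of \emph{some} finite decomposition I would argue by conic duality, which is the dual face of the Fej\'er--Riesz remark preceding the theorem. Pair a Hermitian block-Toeplitz matrix $\BlkToep(T_0,\ldots,T_N)$ with a Hermitian matrix trigonometric polynomial $\Phi(\theta)=\sum_{j=-N}^{N}\Phi_j e^{ij\theta}$ via $\sum_j\langle T_j,\Phi_j\rangle$; under this pairing $\langle v(\theta,u)v(\theta,u)^*,\Phi\rangle=u^*\Phi(\theta)u$. Hence a functional $\Phi$ is nonnegative on every generator precisely when $\Phi(\theta)\psd 0$ for all $\theta$, and by matrix Fej\'er--Riesz these are exactly the functionals nonnegative on the whole positive semidefinite block-Toeplitz cone. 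Since the generators form a compact set bounded away from the origin, the cone $\mathcal C$ they generate is closed, and biduality identifies $\mathcal C$ with the entire positive semidefinite block-Toeplitz cone. Thus a finite decomposition of $M$ exists; but a bare Carath\'eodory bound only caps the number of terms by the ambient dimension $d^2(2N+1)$, far more than $d(N+1)$.

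To reach the sharp count I would reduce the positive-definite case to a rank-deficient boundary case by a \emph{flat extension}. Write the one-step-larger matrix $\BlkToep(T_0,\ldots,T_N,T_{N+1})=\left[\begin{smallmatrix}M & B\\ B^* & T_0\end{smallmatrix}\right]$, in which the only new parameter is the leading block $T_{N+1}$ of $B$; since $M\pd 0$ the Schur complement $T_0-B^*M^{-1}B$ is a Hermitian-quadratic function of $T_{N+1}$, and I would choose $T_{N+1}$ to make it vanish. The extended matrix is then positive semidefinite of rank exactly $d(N+1)$ and size $d(N+2)$, so its kernel has dimension $d$; the block-Toeplitz structure forces this kernel to be the coefficient space of a $d\times d$ matrix polynomial of degree $N+1$ whose determinant vanishes at exactly $d(N+1)$ points $e^{i\theta_k}$ on the unit circle, with the null vectors of the polynomial at these points supplying the $u_k$. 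Decomposing the extended matrix over these $d(N+1)$ harmonic generators and then restricting to the first $N+1$ block rows and columns (which sends $v(\theta,u)v(\theta,u)^*$ to the restricted rank-one matrix of the same form) yields the decomposition of $M$ with exactly $d(N+1)$ terms.

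The crux is this last reduction, and I expect the main obstacle to be proving simultaneously that a rank-preserving (flat) choice of $T_{N+1}$ always exists---the matrix analogue of pushing the next trigonometric moment to the boundary of the moment cone---and that the resulting singular block-Toeplitz matrix has the clean matrix-polynomial kernel description with \emph{all} roots unimodular (so that each $\theta_k$ is real) and all extracted weights positive. Both facts are genuine matrix generalisations of the classical scalar Carath\'eodory--Fej\'er lemma, and it is exactly here that the block-Toeplitz structure, rather than positive semidefiniteness alone, has to be used.
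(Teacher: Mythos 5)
First, a point of comparison that matters here: the paper contains \emph{no} proof of this theorem. It is imported verbatim from Tismenetsky~\cite{tismenetsky1993matrix} and used as a black box inside the proof of Lemma~\ref{lem:herm}, so your attempt cannot be measured against an internal argument and must stand entirely on its own. Much of your scaffolding does stand: the identification of the summands with the dyads $v(\theta,u)v(\theta,u)^*$, where $v(\theta,u)=(u,e^{i\theta}u,\ldots,e^{iN\theta}u)$, is exactly the computation the paper itself performs inside Lemma~\ref{lem:herm}; your duality argument (nonnegativity on the generators is equivalent to $\Phi(\theta)\psd 0$ for all $\theta$, Fej\'er--Riesz factorization turns such a $\Phi$ into a Gram matrix so that it is nonnegative on every positive semidefinite block-Toeplitz matrix, and closedness of the conic hull of a compact set avoiding the origin plus biduality then identify the two cones) correctly yields existence of \emph{some} finite decomposition; and a flat one-step extension does exist, since after completing the square the requirement on $T_{N+1}$ is an equation $X^*PX=S$ with $P,S\pd 0$, solvable by $X=P^{-1/2}VS^{1/2}$ for any unitary $V$.

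The genuine gap is the step you yourself call the crux, and it cannot be waved through: the claim that the $d$-dimensional kernel of the flat extension is the coefficient space of a monic $d\times d$ matrix polynomial whose determinant has all of its $d(N+1)$ roots on the unit circle, with null vectors supplying the $u_k$ and with positive extracted weights, \emph{is} the matrix Carath\'eodory--Fej\'er theorem---that is, it is the statement being proved, in thin disguise. Your proposal offers no argument for it beyond the expectation that the scalar case generalizes, so nothing beyond the coarse $d^2(2N+1)$-term decomposition of your duality paragraph has actually been established. Moreover, the claim is false as literally stated: the determinant's roots need not be $d(N+1)$ distinct points, since eigenvalue collisions produce fewer frequencies whose weight matrices have rank larger than one; the count $d(N+1)$ applies to rank-one terms, not to nodes. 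The clean way to finish within your own framework is operator-theoretic rather than determinantal: because the extension is flat and its leading $d(N+1)\times d(N+1)$ block is positive definite, every residue class modulo the kernel has a unique representative among vectors supported on the first $N+1$ blocks, so ``multiplication by $z$'' (the block shift) compresses to a well-defined isometry, hence a unitary $U$, on the $d(N+1)$-dimensional quotient equipped with the inner product induced by $\BlkToep(T_0,\ldots,T_{N+1})$. The spectral theorem for $U$ then delivers unimodular nodes $e^{i\theta_k}$ and positive semidefinite matrices $M_k$ with $T_j=\sum_k e^{ij\theta_k}M_k$ and $\sum_k\textup{rank}\,M_k\le d(N+1)$, and spectrally decomposing each $M_k$ produces the rank-one terms with $\lambda_k>0$: unimodularity and positivity come for free, instead of being properties of a root locus that you would still have to establish by hand.
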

The following lemma is a slight modification of Theorem~\ref{thm:extrays}.
\begin{lemma}
    \label{lem:herm}
    Let $\moment_N$ be defined as in~\eqref{eq:moment}. Then 
    \begin{align}
        \conv(\moment_N) = \{(X_0,X_1,Y_1,\ldots,X_N,Y_N)\in 
            & (\Sym^4)^{2N+1}:\;\tr(X_0)=1\nonumber\\
            & \BlkToep(X_0,X_1+iY_1,\ldots,X_N+iY_N) \psd 0\}.\label{eq:herm}
    \end{align}
\end{lemma}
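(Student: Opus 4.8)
The plan is to prove the two inclusions of~\eqref{eq:herm} separately. The inclusion $\subseteq$ is the routine one: I would check that every generator of $\moment_N$ lies in the set on the right and that this set is closed and convex. Indeed, if $q\in\H$ and $\omega\in[-\pi,\pi)$ then $\tr(qq^T)=\|q\|^2=1$, and the associated Hermitian block-Toeplitz matrix factors as
\[
\BlkToep(qq^T, qq^Te^{i\omega},\ldots,qq^Te^{iN\omega}) = vv^* \psd 0,
\qquad v=(q, qe^{-i\omega},\ldots,qe^{-iN\omega})^T,
\]
since its $(j,k)$ block is $qq^Te^{i(k-j)\omega}$. The set on the right of~\eqref{eq:herm} is the intersection of the affine constraint $\tr(X_0)=1$ with the preimage of the positive semidefinite cone under a (real-)linear map, hence closed and convex, so it contains $\conv(\moment_N)$.

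For the reverse inclusion $\supseteq$, the plan is to first treat the strictly feasible case with Theorem~\ref{thm:extrays} and then pass to the boundary by a closure argument. Suppose first that $M:=\BlkToep(X_0,X_1+iY_1,\ldots,X_N+iY_N)\pd 0$. Tismenetsky's theorem produces $u_k\in\C^4$, $\omega_k\in[-\pi,\pi)$ and $\lambda_k>0$ with $M=\sum_k\lambda_k v_kv_k^*$, where $v_k=(u_k, u_ke^{-i\omega_k},\ldots,u_ke^{-iN\omega_k})^T$ generates the $k$-th rank-one block-Toeplitz term. The one obstacle is that the $u_k$ are \emph{complex}, whereas the generators of $\moment_N$ use real quaternions; the whole content of the lemma is to realify this decomposition while preserving its block-Toeplitz (frequency) structure.

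To do this I would exploit the symmetry that singles out the Hermitian block-Toeplitz matrices coming from \emph{real symmetric} tuples. Let $P$ be the block-reversal permutation of $\C^{4(N+1)}$. Because each $X_j,Y_j$ is real symmetric, a direct check gives $PMP=\overline{M}$, equivalently $M=P\,\overline{M}\,P$. Applying $P\,\overline{(\cdot)}\,P$ to Tismenetsky's decomposition and averaging with the original yields
\[
M=\tfrac{1}{2}\sum_k\lambda_k\Big[v_kv_k^* + (P\overline{v_k})(P\overline{v_k})^*\Big].
\]
A short computation shows $(P\overline{v_k})(P\overline{v_k})^*$ is the rank-one block-Toeplitz matrix with the \emph{same} frequency $\omega_k$ but generating matrix $\overline{u_k}\,\overline{u_k}^*$ (the global phase $e^{iN\omega_k}$ introduced by $P$ cancels). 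Hence the $k$-th term becomes the block-Toeplitz matrix generated at frequency $\omega_k$ by $\tfrac12(u_ku_k^*+\overline{u_k}\,\overline{u_k}^*)=\operatorname{Re}(u_ku_k^*)=a_ka_k^T+b_kb_k^T$, writing $u_k=a_k+ib_k$ with $a_k,b_k\in\R^4$. This matrix is real, symmetric and positive semidefinite, so splitting it into its (at most two) real rank-one pieces $p\,p^T$ exhibits each term as a nonnegative multiple of a generator of $\moment_N$ at frequency $\omega_k$. Reading off the blocks $T_j=\sum_k\lambda_k e^{ij\omega_k}(a_ka_k^T+b_kb_k^T)$ and using $\tr(X_0)=\sum_k\lambda_k\|u_k\|^2=1$ shows the tuple is a genuine convex combination of elements of $\moment_N$.

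Finally, to remove strictness I would perturb towards a fixed interior point. The tuple $(\tfrac14 I,0,\ldots,0)$ lies in $\conv(\moment_N)$ (it is the barycenter obtained by averaging $qq^T$ uniformly over $\H$ and $\omega$ uniformly over $[-\pi,\pi)$), and its Hermitian block-Toeplitz matrix is $\tfrac14 I\pd 0$. Given any feasible tuple with $M\psd 0$, its convex combinations with this point have $\tr(X_0)=1$ and are strictly feasible for every $\epsilon\in(0,1]$, hence lie in $\conv(\moment_N)$ by the positive definite case; letting $\epsilon\to0^+$ and using that $\conv(\moment_N)$ is compact (being the convex hull of the compact set $\moment_N$) gives the claim. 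I expect the realification via $PMP=\overline{M}$ to be the main obstacle, as it is the only step beyond invoking the classical factorization result Theorem~\ref{thm:extrays}.
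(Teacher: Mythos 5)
Your proof is correct and takes essentially the same route as the paper's: the easy inclusion via the rank-one factorization $vv^*$ plus convexity, Tismenetsky's theorem in the positive definite case, realification by averaging each rank-one term with its complex conjugate, and a closure argument for the boundary; your block-reversal identity $PMP=\overline{M}$ yields exactly the paper's blockwise symmetrization $\tfrac12\left[(X_j+iY_j)+(X_j+iY_j)^T\right]$, with the same real generators $\operatorname{Re}(u_ku_k^*)=a_ka_k^Ta+b_kb_k^T$ replaced by $a_ka_k^T+b_kb_k^T$. Your explicit strictly feasible point $(\tfrac14 I,0,\ldots,0)$ and the limit $\epsilon\to 0^+$ simply make concrete the paper's relative-interior/closedness step, so the two arguments coincide in substance.
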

\begin{proof}
    Let
    $(qq^T,qq^T\cos(\omega),qq^T\sin(\omega),\ldots,qq^T\cos(N\omega),qq^T\sin(N\omega))\in
    \moment_N$. Then $\tr(qq^T)=\|q\|^2 = 1$ and it is straightforward to check
    that 
\[ \BlkToep(qq^T,qq^T\cos(\omega)+iqq^T\sin(\omega),\ldots,qq^T\cos(N\omega)+iqq^T\sin(N\omega)) = 
\begin{bmatrix}q\\qe^{-i\omega}\\\vdots\\ qe^{-i\omega N}\end{bmatrix}
\begin{bmatrix} q \\ qe^{-i\omega} \\ \vdots \\ qe^{-iN\omega}\end{bmatrix}^* \psd 0.\]
    Hence $\moment_N$ is a subset of the right hand side of~\eqref{eq:herm}.
    Since the right hand side of~\eqref{eq:herm} is convex, it follows that
    $\conv(\moment_N)$ is also a subset of the right-hand side
    of~\eqref{eq:herm}.

    Now suppose $(X_0,X_1,Y_1,\ldots,X_N,Y_N)\in (\Sym^4)^{2N+1}$ satisfies
    \[ \tr(X_0)=1\quad\text{and}\quad\BlkToep(X_0,X_1+iY_1,\ldots,X_N+iY_N) \pd 0.\] 
Then by Theorem~\ref{thm:extrays} there are $u_k\in \C^4$, $\omega_k\in
[-\pi,\pi)$ and $\lambda_k > 0$ for $k=1,2,\ldots,4(N+1)$ such that $1 =
    \tr(X_0) = \sum_{k=1}^{4(N+1)}\lambda_k\|u_k\|^2$ and  $X_j+iY_j =
    \sum_{k=1}^{4(N+1)}\lambda_k u_ku_k^*e^{ij\omega_k}$ for $j=0,1,\ldots,N$.
    Since $X_j^T = X_j$ and $Y_j^T=Y_j$ for all $j$, it follows by a
    straightforward calculation that there are $v_k\in \R^4$ and $\lambda_k'>0$
    for $k=1,2,\ldots,8(N+1)$, such that 
\[    X_j+iY_j = \frac{1}{2}\left((X_j+iY_j)+(X_j+iY_j)^T\right)
    = \sum_{k=1}^{4(N+1)}\frac{\lambda_k}{2} \left((u_ku_k^*)+(u_ku_k^*)^T\right)e^{ij\omega_k}
    = \sum_{k=1}^{8(N+1)}\lambda_{k}'v_kv_k^Te^{ij\omega_k}.\]
Defining $\mu_k = \lambda_k'\|v_k\|^2>0$ and $q_k = v_k/\|v_k\|\in \H$ for
$k=1,2,\ldots,8(N+1)$ we have that $\sum_{k=1}^{8(N+1)}\mu_k = 1$ and 
\[ X_j = \sum_{k=1}^{8(N+1)}\mu_k q_kq_k^T\cos(j\omega_k)\quad\text{for $j=0,1,\ldots,N$ and}\quad
    Y_j = \sum_{k=1}^{8(N+1)}\mu_k q_kq_k^T\sin(j\omega_k)\]
for $j=1,2,\ldots,N$.  This shows that $(X_0,X_1,Y_1,\ldots,X_N,Y_N)\in
\conv(\moment_N)$.  Hence the relative interior of the right-hand side
of~\eqref{eq:herm} is a subset of $\conv(\moment_N)$. Since $\conv(\moment_N)$
is closed the right-hand side of~\eqref{eq:herm} is also a subset of
$\conv(\moment_N)$, establishing the result.
\end{proof}

\begin{lemma}
    \label{lem:realify}
    If $X_0,X_1,Y_1,\ldots,X_N,Y_N\in \Sym^d$ then
    \[
    \BlkToep(X_0,X_1+iY_1,\ldots,X_N+iY_N) \psd 0\]
    if and only if 
    \[ 
        \BlkToep(X_0,X_1,\ldots,X_N) + \BlkHank(-Y_N,-Y_{N-1},
        \ldots,-Y_1,0,Y_1,\ldots,Y_{N-1},Y_N) \psd 0.\]
\end{lemma}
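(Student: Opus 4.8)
The plan is to reduce the stated equivalence to a clean linear-algebra fact: the complex Hermitian matrix on the left and the real symmetric matrix on the right are \emph{unitarily similar}, hence cospectral, so one is positive semidefinite exactly when the other is.

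First I would name the two matrices. Write the complex block-Toeplitz matrix as $H = A + iB$, where $A = \BlkToep(X_0,X_1,\ldots,X_N)$ is real symmetric and $B$ is the real matrix whose $(j,k)$ block equals $Y_{k-j}$, under the convention $Y_0 = 0$ and $Y_{-n} = -Y_n$; the Hermitian-transpose convention on the lower blocks of $H$ (introduced at the start of this appendix) then produces exactly the imaginary part $\pm i Y_{|k-j|}$. Because each $Y_n$ is symmetric, $B$ is skew-symmetric, so $H$ is indeed Hermitian. Likewise write the real matrix on the right as $M = A + C$, where $C$ is the block Hankel matrix; a short index computation shows that its $(j,k)$ block is $Y_{j+k-N-2}$ (same sign convention), with the central zero block sitting on the main block anti-diagonal.

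Next I would introduce $J$, the block-reversal permutation (the matrix carrying $I_d$ on the block anti-diagonal), which is a symmetric orthogonal involution: $J = J^T$ and $J^2 = I$. The key structural identities, all following from the Toeplitz/Hankel symmetries by index bookkeeping, are: $JA = AJ$ (persymmetry of a symmetric block-Toeplitz matrix), $JB = -BJ$ (the Hankel/Toeplitz interplay forces anticommutation), and, crucially, $C = JB$. With these in hand I would exhibit the explicit unitary $U = \tfrac{1+i}{2}I + \tfrac{1-i}{2}J$ (built from the spectral projections $(I\pm J)/2$ of $J$). A one-line check using $J^2 = I$ gives $U^*U = I$. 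Since $A$ commutes with $J$, it commutes with $U$, so $U^*AU = A$; and using the anticommutation $JB = -BJ$ a direct computation gives $U^*(JB)U = iB$, so that $U^*CU = iB$. Combining, $U^*MU = U^*AU + U^*CU = A + iB = H$. Hence $M$ and $H$ are unitarily similar, they have identical (real) spectra, and in particular $M \psd 0$ if and only if $H \psd 0$, which is the assertion.

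I expect the main obstacle to be the bookkeeping that establishes the three structural identities, especially $C = JB$ and the anticommutation $JB = -BJ$: one must track the placement of the central zero block and the sign convention $Y_{-n} = -Y_n$ carefully, so that the Hankel pattern is reproduced exactly by left-multiplying the skew block-Toeplitz matrix $B$ by the reversal $J$. Once these identities are pinned down, the construction of $U$ and the verification $U^*MU = H$ are routine.
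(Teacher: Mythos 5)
Your proof is correct, and it rests on exactly the structural identities that drive the paper's own argument: with $J$ the block-reversal involution, the real Toeplitz part $A$ commutes with $J$, the skew part $B$ anticommutes with $J$, and the Hankel part is precisely $C = JB$ (all three check out against the definitions~\eqref{eq:blktoep} and~\eqref{eq:blkhank}, with your conventions $Y_0 = 0$ and $Y_{-n} = -Y_n$). Where you genuinely depart from the paper is in how these identities are converted into the equivalence. The paper passes to the doubled real representation $Z_{\R} = \left[\begin{smallmatrix} \Re Z & \Im Z\\ -\Im Z & \Re Z\end{smallmatrix}\right]$, invokes the cited fact that $Z \psd 0$ if and only if $Z_{\R}\psd 0$, and then conjugates $Z_{\R}$ by the real orthogonal matrix $Q = \frac{1}{\sqrt{2}}\left[\begin{smallmatrix} I & -J\\ J & I\end{smallmatrix}\right]$ to obtain two diagonal copies of $\Re Z + J\Im Z$. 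You instead stay in $\C^{d(N+1)}$ and conjugate the real matrix $M = A + C$ by the complex unitary $U = \frac{1+i}{2}I + \frac{1-i}{2}J$, landing exactly on $H = A+iB$; your key computation $U^*(JB)U = iB$ is right, as is the unitarity check. The two conjugations are really the same object in different clothing---up to an irrelevant global phase, $U$ equals $\frac{1}{\sqrt{2}}(I-iJ)$, whose real representation is the paper's $Q$---but your packaging buys something concrete: it needs no auxiliary fact about real representations of complex matrices, it avoids doubling the dimension, and it yields the slightly stronger conclusion that the two matrices in the statement are unitarily similar, hence cospectral, from which the positive-semidefiniteness equivalence is immediate.
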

\begin{proof}
    First observe that $Z = \BlkToep(X_0,X_1+iY_1,\ldots,X_N+iY_N)\psd 0$
    if and only if the real $2d(N+1)\times 2d(N+1)$ symmetric matrix 
    \[ Z_{\R} = \begin{bmatrix} \Re Z & \Im Z\\-\Im Z & \Re Z\end{bmatrix}\]
        is positive semidefinite~\cite{goemans2001approximation}. Here $\Re Z$
        and $\Im Z$ are the real and imaginary parts of $Z$, respectively.
        Indeed
        \[\Re Z = \begin{bmatrix} X_0 & X_1 & X_2 & \cdots & X_N\\
        X_1 & X_0 & X_1 & \ddots & \vdots\\
        X_2 & X_1 & \ddots & \ddots & \vdots\\
        \vdots & \ddots & \ddots & \ddots & X_1\\
        X_N & \cdots & \cdots & X_1 & X_0
    \end{bmatrix}\quad\text{and}\quad
                \Im Z = \begin{bmatrix} 0 & Y_1 & Y_2 & \cdots & Y_N\\
        -Y_1 & 0 & Y_1 & \ddots & \vdots\\
        -Y_2 & -Y_1 & \ddots & \ddots & \vdots\\
        \vdots & \ddots & \ddots & \ddots & Y_1\\
        -Y_N & \cdots & \cdots & -Y_1 & 0
    \end{bmatrix}\]
    where we have used the assumption that the $X_i$ and the $Y_i$ are symmetric.

    Let $J$ be the $d(N+1)\times d(N+1)$ matrix with $d\times d$ identity
    blocks on the secondary (anti-) block diagonal. Note that left
    multiplication by $J$ reverses the block rows of a block matrix, and right
    multiplication by $J^T = J$ reverses the block columns.  Observe that
    $J(\Re Z)J = \Re Z$ and $J(\Im Z) + (\Im Z)J = 0$.  Let $Q$ denote the
    orthogonal matrix defined by
\[ Q = \frac{1}{\sqrt{2}}\begin{bmatrix} I & -J\\J & I\end{bmatrix}.\]
    A straightforward calculation shows that
\[ QZ_{\R}Q^T = \frac{1}{2}\begin{bmatrix}I & -J\\J & I\end{bmatrix}
\begin{bmatrix} \Re Z & \Im Z\\-\Im Z & \Re Z\end{bmatrix}
\begin{bmatrix} I & J\\-J & I\end{bmatrix} = 
    \begin{bmatrix} \Re Z + J \Im Z & 0\\0 &
    \Re Z + J \Im Z\end{bmatrix}.\]
    So $Z\psd 0$ if and only if $Z_{\R}\psd 0$ which holds if and only if $\Re
    Z + J \Im Z \psd 0$. Finally we note that 
    \[ \Re Z + J\Im Z = \BlkToep(X_0,X_1,\ldots,X_N) + 
    \BlkHank(-Y_N,\ldots,-Y_1,0,Y_1,\ldots,Y_N)\]
    (because reversing the block rows of a block Toeplitz matrix makes it block
    Hankel) to complete the proof.
\end{proof}

\bibliographystyle{plain}
\bibliography{att_bib}

\end{document}